\documentclass[11pt]{article}
\usepackage[english]{babel}
\usepackage[width=15cm,height=20cm]{geometry}

\usepackage{tikz}
\usepackage{amsmath,amsfonts,amssymb,mathtools}
\usepackage{comment}
\usepackage{enumitem}
\usepackage{booktabs}%
\usepackage{amsthm}

\usepackage{pgfplots}
\pgfplotsset{compat=1.18}

\newtheorem{thm}{Theorem}
\newtheorem{prop}[thm]{Proposition}
\newtheorem{lem}[thm]{Lemma}
\newtheorem{cor}[thm]{Corollary}
\theoremstyle{definition}

\newtheorem{remark}[thm]{Remark}

\usepackage[colorlinks=true,allcolors=blue]{hyperref}

\newcommand{\pheq}{\phantom{=}\ }
\newcommand{\eqdef}{\coloneqq}
\newcommand{\al}{\alpha}

\newcommand{\ga}{\gamma}
\newcommand{\be}{\beta}
\newcommand{\de}{\delta}
\newcommand{\De}{\Delta}
\newcommand{\ka}{\varkappa}
\newcommand{\La}{\Lambda}
\newcommand{\la}{\lambda}
\renewcommand{\phi}{\varphi}
\newcommand{\tht}{\vartheta}
\newcommand{\fp}{\operatorname{fp}}
\newcommand{\asympt}{\operatorname{asympt}}
\newcommand{\gen}{\operatorname{gen}}
\newcommand{\ErAsympt}{\operatorname{E}^{\asympt}}

\newcommand{\ErMuAsympt}{\operatorname{G}^{\asympt}}

\newcommand{\bC}{\mathbb{C}}
\newcommand{\bD}{\mathbb{D}}
\newcommand{\bN}{\mathbb{N}}
\newcommand{\bR}{\mathbb{R}}

\newcommand{\bZ}{\mathbb{Z}}

\newcommand{\cB}{\mathcal{B}}

\newcommand{\cV}{\mathcal{V}}

\newcommand{\diag}{\operatorname{diag}}

\renewcommand{\Re}{\operatorname{Re}}
\renewcommand{\Im}{\operatorname{Im}}
\newcommand{\iu}{\operatorname{i}{}}
\newcommand{\enumber}{\operatorname{e}}
\newcommand{\medstrut}{\vphantom{\int_0^1}}
\newcommand{\hstrut}{\mbox{}\ \mbox{}}
\newcommand{\bigstrut}{\vphantom{\int_{0_0}^{1^1}}}

\title{Eigenvalues of one family of tridiagonal 
skew-self-adjoint Toeplitz matrices with complex perturbations on the corner}
\author{
C\'edric Bernardin, Sergei M. Grudsky,
\\
Egor A. Maximenko
and Alejandro Soto-Gonz\'alez\footnote{Corresponding author (email: \url{asoto@math.cinvestav.mx}).}
}

\begin{document}

\maketitle

\begin{center}
    \textit{
         Dedicated to the memory of our dear teacher, Igor Borisovich Simonenko.
    }
\end{center}

\begin{abstract}
In this paper,
we study the eigenvalues of the matrices
$T_n(a)+\gamma E_{n,1,1}$
where $T_n(a)$ is the Toeplitz matrix with generating symbol $a(t)=t-t^{-1}$,
$E_{n,1,1}$ is the $n\times n$ matrix whose upper left component is $1$ and the other components are zero,
and $\gamma$ is a fixed complex number such that $0<|\gamma|<1$.
As $n\to\infty$, the eigenvalues of these matrices are asymptotically distributed as the function $2 i \sin(x)$, $x\in[0,2\pi]$.
Our main result is an asymptotic formula for every eigenvalue with a residue of the order $O(1/n^3)$.

\medskip\noindent
\textbf{Keywords:}
Toeplitz matrix,
eigenvalue, asymptotic expansion,
corner perturbation,
non-selfadjoint matrix, Chebyshev's polynomials.

\medskip\noindent
\textbf{Mathematics Subject Classification (2020):}
15B05, 15A18, 41A60, 47B36, 65F15.
\end{abstract}

\section{Introduction}

Given a Laurent polynomial
\[
    b(t) = \sum_{j=-N}^M b_j t^j,
\]
we denote by $T_n(b)$ the Toeplitz matrix of order $n$ with generating symbol $b$:
\[
    T_n(b)
    \eqdef \bigl[ b_{j-k} \bigr]_{j,k=1}^n.
\]
We denote by $E_{n,1,1}$ the $n\times n$ matrix whose component $(1,1)$ is $1$ and the other components are $0$.
In terms of the Kronecker delta,
\[
E_{n,1,1}
\eqdef
\bigl[ \de_{j,1} \de_{k,1} \bigr]_{j,k=1}^n.
\]

In what follows,
let $a$ be the Laurent polynomial
$a(t)=t-t^{-1}$.
The corresponding Toeplitz matrices are tridiagonal.
They are not self-adjoint  
(i.e., not Hermitian)
but skew-self-adjoint.
For example,
\[
T_5(a)
=
\begin{bmatrix*}[r]
0 & -1 & 0 & 0 & 0 \\
1 & 0 & -1 & 0 & 0 \\
0 & 1 & 0 & -1 & 0 \\
0 & 0 & 1 & 0 & -1 \\
0 & 0 & 0 & 1 & 0
\end{bmatrix*}.
\]
The eigenvalues of $T_n(a)$ are well known, see~\eqref{eq:eigenvalues_tridiagonal_antisymmetric} in Section~\ref{sec:CharPol}.

In this paper,
we consider the family of $n\times n$
non-self-adjoint tridiagonal matrices $A_{\ga,n}$ defined by
\[
A_{\ga,n}
\eqdef T_n(a) + \ga E_{n,1,1}.
\]
For example,
\[
    A_{\ga,5}
    =
    \begin{bmatrix*}[r]
    \ga & -1 & 0 & 0 & 0 \\
    1 & 0 & -1 & 0 & 0 \\
    0 & 1 & 0 & -1 & 0 \\
    0 & 0 & 1 & 0 & -1 \\
    0 & 0 & 0 & 1 & 0
    \end{bmatrix*}.
\]
Our objective is to study the behavior of the eigenvalues of $A_{\ga,n}$ as $\ga$ is a fixed complex number such that $0<|\ga|<1$ and $n$ tends to infinity.
In a future paper, we are going present our study for values of $\ga$ such that $1\le |\ga|$.

Numerical experiments show that for $0<\ga<1$ and $n$ large enough,
the eigenvalues of $A_{\ga,n}$
concentrate near the segment
of the imaginary axis
joining the points $-2\iu$ and $2\iu$;
see
Figure~\ref{fig:eig}.

\begin{figure}[htb]
\centering
\includegraphics{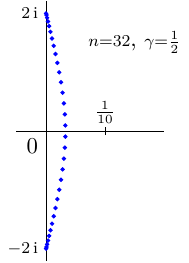}
\qquad\qquad
\includegraphics{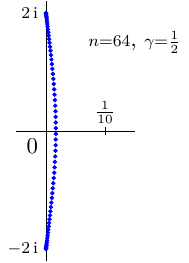}
\caption{Eigenvalues of $A_{1/2,32}$ (left)
and $A_{1/2,64}$ (right).
The X and Y scales are not equal.
\label{fig:eig}}
\end{figure}

Perturbed Toeplitz matrices appear in the discretizations of partial differential equations (see, for instance, LeVeque~\cite{Leveque2007} or Garoni and Serra-Capizzano~\cite{GS2017}) and in the study of resistor
networks (see Jiang, Zhou, Jiang, Zheng~\cite{JZJZ2023} and Zhang, Jiang, Zheng, Jiang~\cite{ZJZJ2024} and the references therein).
The matrices $A_{\ga,n}$ or very closely related matrices emerge in the problem of cut-off for harmonic chains of oscillators coupled with thermal baths (see, Eckmann and Hairer~\cite{EH2000},  Rieder, Lebowitz and Lieb~\cite{RLL1967}). 

For $\ga = \pm\iu$,  Yueh and Cheng~\cite{YuehCheng2008} computed the eigenvalues of $A_{\ga,n}$ explicitly.

Chorianopoulos and Famelis~\cite{CF2026} studied asymptotic expansions of the eigenvalues and eigenvectors of $A_{\ga,n}$ as $|\ga|\to0$ and $n$ is fixed.
They obtained asymptotic eigenvalue formulas with residue of order $O(|\ga|^5)$.
The methods of~\cite{CF2026} and the form of the asymptotic formulas are quite different from ours.
In Section~\ref{sec:NumTests}, we compare the precision of the obtained results. 

The study of Toeplitz matrices with finite rank perturbations is still under development.
B\"ottcher, Fukshansky, Garcia, and Maharaj~\cite{BFGM2014}, and Jia and Li~\cite{JL2015,JL2017}, provide calculus of the determinant for several types of Toeplitz matrices with perturbations in the four corners.

Da Fonseca, and Veerman~\cite{DV2009}, Yueh and Cheng ~\cite{YuehCheng2008}, and Zhang, 
Jiang and Jiang \cite{ZJJ2019}, analyzed the spectrum for some cases of tridiagonal Toeplitz matrices with perturbations in the four corners.
There the characteristic equation is presented, although asymptotic expansions are not computed.

Here, for arbitrary complex values of $\ga$ with $|\ga|<1$, we present eigenvalue asymptotic expansion of the eigenvalues of $A_{\ga,n}$.
So, our results extend previous works, moreover, the residue of our formulas are of order $O(1/n^3)$ as $n\to\infty$.

The procedure and results of this paper follow the structure of our previous works on this area~\cite{GMS2021,GMS2022,GMS2023,GMS2025}.
Those papers are inspired in the ``simple-loop method'', see~\cite{BGM2017,BBGM2015} and the references therein for a better complete introduction to this subject. 


\section{Main results}

Let $\bD$ be the open unit disk in the complex plane.
We treat $\ga\in\bC$ as a fixed parameter such that $\ga\in\bD\setminus\{0\}$, i.e., $0<|\ga|<1$.
We always suppose that $n$ is a natural number such that $n\ge 2$.

In Proposition~\ref{prop:CharPol}, we prove that the characteristic polynomial of $A_{\ga,n}$, which we denote by $D_{\ga,n}(\la)$, can be represented as the following linear combination of Chebyshev polynomials:
\[
   D_{\ga,n}(\la) \eqdef \det(\la I_n - A_{\ga, n}) = \iu^n
\left(U_n\left(-\frac{\iu\la}{2}\right)
+ \iu \ga U_{n-1}\left(-\frac{\iu\la}{2}\right)\right).
\]
Next, we apply the change of variable
$\la = \psi(z)$
where $\psi$ is the entire function
\begin{equation}
\label{eq:psi}
    \psi(z) 
    \eqdef
    \iu
    \bigl(
    \enumber^{\iu z}
    + \enumber^{-\iu z}
    \bigr),
    \qquad \text{that is,}\qquad
    \psi(z) = 2\iu\cos(z).
\end{equation}
Set
\begin{equation}
\label{eq:Delta_ga}
    \De_\ga\eqdef \ln\frac{1}{|\ga|}.
\end{equation}
We define $\tht_\ga \colon \bR + \iu (-\De_\ga, \De_\ga)\to\bC$ by
\begin{equation}
\label{eq:tht}
    \tht_\ga(z)
    \eqdef
    -\frac{\iu}{2}
    \ln\left( \frac{1+\iu\ga \enumber^{\iu z}}{1+\iu\ga \enumber^{-\iu z}}\right).
\end{equation}
It turns out (Proposition~\ref{prop:theta_analytic})
that the argument of the logarithm in~\eqref{eq:tht} does not take real negative values.
Therefore, $\tht_\ga$ is a well-defined analytic function.
Figure~\ref{fig:theta2} shows a plane-to-plane map of $\tht_\ga$.

\begin{figure}
    \centering   \includegraphics{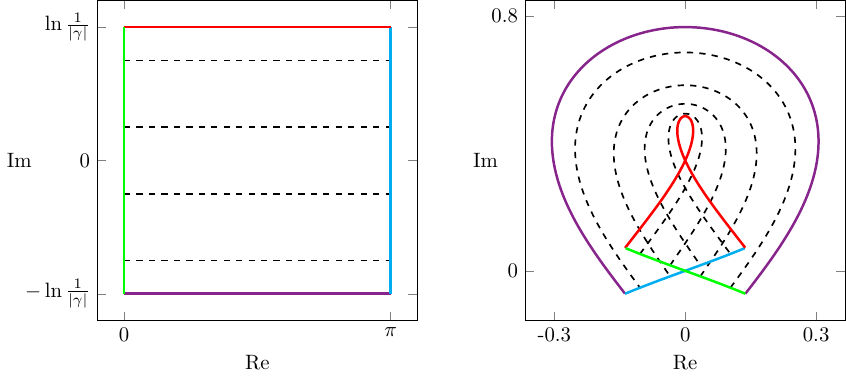}
    \caption{A graphical representation of $\tht_\ga$ for $\ga = 1/2$.}
    \label{fig:theta2}
\end{figure}

 \begin{thm}
 [main equation]
 \label{thm:MainEq}
        For every $n\ge 1$, and $z\in\bR + \iu (-\De_\ga, \De_\ga)$, the equality $D_{\ga,n}(\psi(z)) = 0$ holds if and only if there exists $j$ in $\bZ$ such that
        \begin{equation}
        \label{eq:main_eq}
            z = \frac{j\pi}{n+1} + \frac{\tht_\ga(z)}{n+1}
            .
        \end{equation}
    \end{thm}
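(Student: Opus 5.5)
Substitute $\la=\psi(z)=2\iu\cos z$ into the Chebyshev representation of Proposition~\ref{prop:CharPol}. Then $-\iu\la/2=\cos z$, and $U_m(\cos z)=\sin((m+1)z)/\sin z$ holds as an identity of entire functions (the quotient is a trigonometric polynomial). Hence
\[
D_{\ga,n}(\psi(z))=\iu^n\,\frac{\sin((n+1)z)+\iu\ga\sin(nz)}{\sin z}
\]
away from the zeros of $\sin z$. I will work with the entire function $F(z)\eqdef\sin((n+1)z)+\iu\ga\sin(nz)$, so that $D_{\ga,n}(\psi(z))=\iu^n F(z)/\sin z$. The points $z=k\pi$ (where $\psi(z)=\pm2\iu$) need separate treatment at the end.

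Writing sines as exponentials and multiplying by $2\iu\,\enumber^{\iu(n+1)z}$, the equation $F(z)=0$ becomes
\[
\enumber^{2\iu(n+1)z}\bigl(1+\iu\ga\enumber^{-\iu z}\bigr)=1+\iu\ga\enumber^{\iu z}.
\]
Set $q(z)=(1+\iu\ga\enumber^{\iu z})/(1+\iu\ga\enumber^{-\iu z})$. In the strip $|\Im z|<\De_\ga$ we have $|\ga\enumber^{\pm\iu z}|<1$, so neither the numerator nor the denominator of $q$ vanishes. The equation is therefore $\enumber^{2\iu(n+1)z}=q(z)$. By Proposition~\ref{prop:theta_analytic}, $q(z)=\exp(2\iu\tht_\ga(z))$ with the principal logarithm, so the equation reads $\exp\bigl(2\iu((n+1)z-\tht_\ga(z))\bigr)=1$. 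This holds if and only if $(n+1)z-\tht_\ga(z)=j\pi$ for some $j\in\bZ$, which is exactly \eqref{eq:main_eq}. All these steps are reversible equivalences in the strip, so $F(z)=0\iff$ \eqref{eq:main_eq}.

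The main obstacle is the points $z=k\pi$, where the factor $\sin z$ vanishes; I must check that the equivalence still holds there. At $z=k\pi$ we have $F(k\pi)=0$ and $e^{\pm\iu z}=(-1)^k$, so $q=1$, $\tht_\ga(k\pi)=0$, and \eqref{eq:main_eq} holds with $j=k(n+1)$. So I must show $D_{\ga,n}(\pm2\iu)=0$.

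By continuity of the polynomial, $D_{\ga,n}(\psi(k\pi))=\lim_{z\to k\pi}\iu^nF(z)/\sin z=\iu^n F'(k\pi)/\cos(k\pi)$. Here $F'(k\pi)=\pm\bigl((n+1)+\iu\ga n\bigr)$, which is nonzero because $|\iu\ga n|<n<n+1$. Hence $D_{\ga,n}(\pm2\iu)=\pm\iu^n(n+1+\iu\ga n)\ne0$, so $z=k\pi$ satisfies \eqref{eq:main_eq} without being a zero of $D_{\ga,n}\circ\psi$. This indicates that the statement needs a restriction, presumably that $z\notin\pi\bZ$, equivalently $j\notin(n+1)\bZ$ (or $j=1,\dots,n$ with $\Re z\in(0,\pi)$ as used later). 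I would prove the theorem with that exclusion. For $z\notin\pi\bZ$ the equivalence is exactly the one established above, since there $D_{\ga,n}(\psi(z))=0\iff F(z)=0$.
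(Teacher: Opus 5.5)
Your argument is correct and follows the paper's route. You bring $D_{\ga,n}(\psi(z))=0$ to the exponential form \eqref{eq:exponentialMainEq}, using \eqref{eq:U_trigonometric} instead of \eqref{eq:U_main_identity}, which is the same computation. You then take the principal logarithm that defines $\tht_\ga$.

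Your objection at $z\in\pi\bZ$ is also right, and it exposes a genuine flaw in the statement as printed. Since $Q_\ga(k\pi)=1$, we get $\tht_\ga(k\pi)=0$, so $z=k\pi$ satisfies \eqref{eq:main_eq} with $j=k(n+1)$, whereas $D_{\ga,n}(\pm2\iu)\neq0$. The paper's proof of Proposition~\ref{prop:exponential_MainEq} discards $\pi\bZ$ by appealing to Proposition~\ref{prop:plusminus_2_are_not_eigenvalues}. That only justifies the implication from $D_{\ga,n}(\psi(z))=0$ to \eqref{eq:exponentialMainEq}; the converse fails exactly on $\pi\bZ$. So the theorem should be stated for $z\notin\pi\bZ$, as you do. The later results are unaffected, because for $j\in\{1,\ldots,n\}$ a solution of \eqref{eq:main_eq} can never lie in $\pi\bZ$.

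Two small remarks. First, your reformulation ``equivalently $j\notin(n+1)\bZ$'' is immediate only in one direction, so keep $z\notin\pi\bZ$ as the hypothesis. Second, for odd $k$ one has $F'(k\pi)=(-1)^{n+1}\bigl((n+1)-\iu\ga n\bigr)$. Hence $D_{\ga,n}(-2\iu)=(-\iu)^n(n+1-\iu\ga n)$, not $\pm\iu^n(n+1+\iu\ga n)$. Nonvanishing is unaffected, and you could simply cite Proposition~\ref{prop:plusminus_2_are_not_eigenvalues} instead of recomputing.
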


Next, we will localize $j$ and $z$ more precisely.

For every $n$ in $\bN$, and every $j$ in $\{1,\ldots,n\}$, we set 
\begin{equation*}
    d_{n,j} \eqdef \frac{j\pi}{n+1},
    \qquad
    p_{\ga,n,j}\eqdef d_{n,j} + \frac{\tht_\ga(d_{n,j})}{n+1}.
\end{equation*}
In what follows, we use $p_{\ga,n,j}$ as the first approximation of the solution of~\eqref{eq:main_eq}, and $d_{n,j}$ can be seen as the zeroth approximation of the solution.

Now, we consider the following small disks around the points $p_{\ga,n,j}$:
\begin{equation}
\label{eq:B_j}
    \cB_{\ga,n,j}
    \eqdef
    \left\{
    z\in\bC \colon \left|z- p_{\ga,n,j}\right|<\frac{M_\ga}{(n+1)^2}
    \right\},
\end{equation}
where 
\begin{equation}
    \label{eq:M_gamma}
    M_\ga\eqdef 
    \frac{64\sqrt{|\ga|}}{(1-|\ga|)^3}.
\end{equation}
We provide a rough but explicit estimate $N_\ga$ for ``good'' values of $n$, such that the condition $n\ge N_\ga$ is sufficient for the following theorems:
\begin{equation}
\label{eq:N_gamma}
    N_\ga\eqdef 
    \frac{16}{(1-|\ga|)^2}.
\end{equation}

\begin{thm}[localization of the eigenvalues]
\label{thm:SolutionMainEq}
    For every $n\ge N_\ga$ and $j$ in $\{1,\ldots,n\}$, there exists a unique $s_{\ga,n,j}$ in $\cB_{\ga,n,j}$ that satisfies~\eqref{eq:main_eq}.   
    Moreover, $\psi(s_{\ga,n,j})$ is an eigenvalue of $A_{\ga,n}$.
\end{thm}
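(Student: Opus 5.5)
We will treat \eqref{eq:main_eq} for fixed $n$ and $j$ as a fixed-point problem. Define
\[
f_{\ga,n,j}(z)\eqdef d_{n,j}+\frac{\tht_\ga(z)}{n+1},
\]
and show that $f_{\ga,n,j}$ maps the closed disk $\overline{\cB_{\ga,n,j}}$ into $\cB_{\ga,n,j}$ and is a contraction there. Banach's fixed point theorem then yields a unique solution $s_{\ga,n,j}$ in the disk. By Theorem~\ref{thm:MainEq}, $D_{\ga,n}(\psi(s_{\ga,n,j}))=0$, so $\psi(s_{\ga,n,j})$ is an eigenvalue of $A_{\ga,n}$.

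Before that, we check that the disk lies in the strip $\bR+\iu(-\De_\ga,\De_\ga)$ where $\tht_\ga$ is analytic (Proposition~\ref{prop:theta_analytic}). In fact we want it inside a narrower strip $|\Im z|\le \De_\ga/2$, where explicit bounds are available. The center satisfies $|\Im p_{\ga,n,j}|\le \|\tht_\ga\|_{\infty,\bR}/(n+1)$, and the radius is $M_\ga/(n+1)^2$. With the bounds below and $n\ge N_\ga$, both are small compared with $\De_\ga\ge 1-|\ga|$.

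The derivative bound is elementary. Differentiating \eqref{eq:tht} gives
\[
\tht_\ga'(z)=\frac{\ga \enumber^{\iu z}}{2(1+\iu\ga \enumber^{\iu z})}+\frac{\ga \enumber^{-\iu z}}{2(1+\iu\ga\enumber^{-\iu z})}.
\]
For $|\Im z|\le \De_\ga/2$ we have $|\ga \enumber^{\pm\iu z}|\le \sqrt{|\ga|}$, hence
\[
|\tht_\ga'(z)|\le \frac{\sqrt{|\ga|}}{1-\sqrt{|\ga|}}\le \frac{2\sqrt{|\ga|}}{1-|\ga|}.
\]
The same quantity bounds $|\tht_\ga(z)|$ on the real line, since $\tht_\ga(0)=0$ and $\tht_\ga$ is $2\pi$-periodic; alternatively, one can use the series $\ln(1+w)$. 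Similar estimates give $|\tht_\ga''|\lesssim \sqrt{|\ga|}/(1-|\ga|)^2$ if needed.

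\emph{Contraction.} On the disk,
\[
|f'(z)|\le \frac{2\sqrt{|\ga|}}{(1-|\ga|)(n+1)}<\tfrac12
\]
once $n\ge N_\ga$.

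\emph{Self-mapping.} For $z$ in the disk,
\[
|f(z)-p_{\ga,n,j}|=\frac{|\tht_\ga(z)-\tht_\ga(d_{n,j})|}{n+1}\le \frac{\sup|\tht_\ga'|}{n+1}\,|z-d_{n,j}|.
\]
Moreover,
\[
|z-d_{n,j}|\le |p_{\ga,n,j}-d_{n,j}|+\frac{M_\ga}{(n+1)^2}\le \frac{2\sqrt{|\ga|}}{(1-|\ga|)(n+1)}+\frac{M_\ga}{(n+1)^2}.
\]
This yields a bound of the form
\[
\frac{C_1|\ga|}{(1-|\ga|)^2(n+1)^2}+\frac{C_2\sqrt{|\ga|}M_\ga}{(1-|\ga|)(n+1)^3},
\]
and one verifies it is $<M_\ga/(n+1)^2$ using $\sqrt{|\ga|}\le1$ and $n+1\ge N_\ga$, which makes the second term at most $M_\ga/(8(n+1)^2)$, say.

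\emph{Main obstacle.} The delicate part is the bookkeeping of constants. The strip width shrinks like $1-|\ga|$, while the bounds on $\tht_\ga,\tht_\ga'$ blow up like $(1-|\ga|)^{-1}$. We must confirm that the specific choices $M_\ga=64\sqrt{|\ga|}(1-|\ga|)^{-3}$ and $N_\ga=16(1-|\ga|)^{-2}$ keep the disk inside the strip where these bounds hold. Concretely, we need
\[
\frac{2\sqrt{|\ga|}}{(1-|\ga|)(n+1)}+\frac{M_\ga}{(n+1)^2}\le \frac{\De_\ga}{2},
\]
using $\De_\ga\ge 1-|\ga|$. Plugging in $n+1\ge 16(1-|\ga|)^{-2}$ gives a first term at most $(1-|\ga|)/8$ and a second at most $(1-|\ga|)/4$, which works. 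The slack in the constant $64$ is what absorbs the self-mapping estimate.
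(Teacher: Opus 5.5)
Your proposal is correct and follows the same route as the paper: you check that the disk lies in the strip $\cV_\ga$, use a mean-value estimate centered at $d_{n,j}$ for the self-mapping, show $|f_{\ga,n,j}'|\le\frac12$, and apply Banach's theorem (using the closed disk mapped into the open one, which is slightly more careful than the paper). The only differences are sharper constants, $\sup_{\cV_\ga}|\tht_\ga'|\le 2\sqrt{|\ga|}/(1-|\ga|)$ and $\sup_{\bR}|\tht_\ga|\le 2\sqrt{|\ga|}/(1-|\ga|)$ in place of the paper's $8\sqrt{|\ga|}/(1-|\ga|)^2$ and $4/(1-|\ga|)$, and one small slip: $\tht_\ga(0)=0$ plus periodicity only yields $\sup_{\bR}|\tht_\ga|\le\pi\sup_{\bR}|\tht_\ga'|$, so rely on your series alternative, which gives $|\tht_\ga(x)|\le-\ln(1-|\ga|)\le|\ga|/(1-|\ga|)$ and suffices.
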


The proof of Theorem~\ref{thm:SolutionMainEq} lies in the fact that for $n\ge N_\ga$, the right-hand side of~\eqref{eq:main_eq} defines a contraction on $\cB_{\ga,n,j}$.
This implies also that the fixed iteration method can be applied to numerically compute the eigenvalues of $A_{\ga,n}$;
see Section~\ref{sec:NumTests} where we test this and later results.

\begin{cor}[separation of the eigenvalues]
\label{cor:distintc_eigvals}
    For every $n\ge N_\ga$, the numbers $\psi(s_{\ga,n,j})$ with $j$ in $\{1,\ldots,n\}$, are pairwise distinct eigenvalues of $A_{\ga,n}$.
    Moreover, the real parts of the numbers $s_{\ga,n,j}$ form a strictly increasing list contained in $(0,\pi)$:
    \begin{equation*}
        0< 
        \Re(s_{\ga,n,1}) < 
        \Re(s_{\ga,n,2}) <
        \cdots
        <
        \Re(s_{\ga,n,n})
        < 
        \pi.
    \end{equation*}
\end{cor}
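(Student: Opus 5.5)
The plan is to deduce the corollary from Theorem~\ref{thm:SolutionMainEq} by two facts: the disks $\cB_{\ga,n,j}$ are pairwise disjoint and ordered along the real axis, and $\psi$ is injective on the strip $\{z\colon 0<\Re z<\pi\}$. Once both are in place, distinctness of the points $s_{\ga,n,j}$ passes to distinctness of the eigenvalues $\psi(s_{\ga,n,j})$.

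\emph{Step 1: the centers.} Since $d_{n,j}$ is real and $\ga\in\bD$, I first need a uniform bound for $\tht_\ga$ and its derivative on $\bR$. On $\bR$ the quantity $\ga\enumber^{\pm\iu x}$ has modulus $|\ga|$, so each factor $1+\iu\ga\enumber^{\pm\iu x}$ lies in the disk of radius $|\ga|$ around $1$. Its argument is therefore bounded by $\arcsin|\ga|<\pi/2$, which gives $|\Re\tht_\ga(x)|\le \arcsin|\ga|$. Differentiating,
\[
\tht_\ga'(z)=\frac{\ga}{2}\left(\frac{\enumber^{\iu z}}{1+\iu\ga\enumber^{\iu z}}+\frac{\enumber^{-\iu z}}{1+\iu\ga\enumber^{-\iu z}}\right),
\]
so $|\tht_\ga'(x)|\le |\ga|/(1-|\ga|)$ on $\bR$. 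This derivative estimate is presumably already used in the contraction argument behind Theorem~\ref{thm:SolutionMainEq}. From it, the gap between consecutive centers satisfies
\[
\Re\bigl(p_{\ga,n,j+1}-p_{\ga,n,j}\bigr)\ \ge\ \frac{\pi}{n+1}\left(1-\frac{|\ga|}{(1-|\ga|)(n+1)}\right)\ \ge\ \frac{\pi}{2(n+1)}
\]
whenever $n\ge N_\ga$. At the endpoints I will use that $\tht_\ga(0)=0$, since the argument of the logarithm equals $1$ at $z=0$, and that $\tht_\ga(\pi)=0$ likewise. Hence $|\tht_\ga(d_{n,1})|\le |\ga|\pi/((1-|\ga|)(n+1))$, which yields $\Re p_{\ga,n,1}\ge \pi/(2(n+1))$, and symmetrically $\Re p_{\ga,n,n}\le \pi-\pi/(2(n+1))$.

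\emph{Step 2: the radii.} Each disk $\cB_{\ga,n,j}$ has radius $M_\ga/(n+1)^2$, so it suffices to show $M_\ga/(n+1)^2<\pi/(4(n+1))$, that is, $n+1>4M_\ga/\pi$. This is the step I expect to be the main obstacle. $M_\ga$ grows like $(1-|\ga|)^{-3}$ while $N_\ga$ grows only like $(1-|\ga|)^{-2}$, so the inequality does not follow from $n\ge N_\ga$ for $|\ga|$ near $1$. I would first try to extract it directly from the hypotheses of Theorem~\ref{thm:SolutionMainEq}. Failing that, I would replace the crude bound on $|s_{\ga,n,j}-p_{\ga,n,j}|$ by the sharper one produced in the contraction argument, namely $|s-p|\le \sup|\tht_\ga'|\cdot|s-d|/(n+1)=O\bigl(1/((1-|\ga|)^2(n+1)^2)\bigr)$, where the sup is over a thin strip around $\bR$. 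Combined with $n\ge N_\ga$, this makes $|s-p|$ at most a small multiple of $1/(n+1)$, which is enough.

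\emph{Step 3: conclusion.} Steps 1 and 2 give $0<\Re s_{\ga,n,1}<\dots<\Re s_{\ga,n,n}<\pi$. The imaginary parts are $O(1/n^2)$ and so lie well inside the strip $|\Im z|<\De_\ga$. Now suppose $\psi(z)=\psi(w)$, i.e.\ $\cos z=\cos w$. Then $z=\pm w+2k\pi$ for some $k\in\bZ$. If both real parts lie in $(0,\pi)$, the only possibility is $z=w$. Therefore the values $\psi(s_{\ga,n,j})$ are pairwise distinct. Each is an eigenvalue by Theorem~\ref{thm:SolutionMainEq}, which completes the argument.
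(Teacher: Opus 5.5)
\textbf{Gap in Step~2.} Your Step~2 is a genuine gap, and it is the crux of your route. As you suspect, the disks $\cB_{\ga,n,j}$ are in general not disjoint for $n\ge N_\ga$. For $|\ga|=1/2$ and $n=N_\ga=64$, the radius $M_\ga/65^2\approx 0.086$ already exceeds the spacing $\pi/65\approx 0.048$ of consecutive centers.

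Your fallback is stated only as an $O$-bound, but the constant is the whole point. You only know $n+1>16/(1-|\ga|)^2$, and you need $|s_{\ga,n,j}-p_{\ga,n,j}|<\pi/(4(n+1))$. So you must prove $|s_{\ga,n,j}-p_{\ga,n,j}|\le C(1-|\ga|)^{-2}(n+1)^{-2}$ with an explicit $C<4\pi$. The paper's bounds give $M_{\ga,0}M_{\ga,1}\sim(1-|\ga|)^{-3}$, which does not suffice. The step can be repaired: your partial-fraction formula for $\tht_\ga'$ yields $|\tht_\ga'|\le\sqrt{|\ga|}/(1-\sqrt{|\ga|})\le 2\sqrt{|\ga|}/(1-|\ga|)$ on $\cV_\ga$, hence $C=8\sqrt{|\ga|}$. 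As written, however, the step is asserted rather than proved.

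\textbf{The disks are not needed.} Since $\tht_\ga=-\frac{\iu}{2}\ln Q_\ga$, one has $\Re\tht_\ga=\frac12\arg Q_\ga\in(-\pi/2,\pi/2)$ on the whole strip, not only on $\bR$. You can therefore apply this bound at $s_{\ga,n,j}$ itself, in the fixed-point equation $s_{\ga,n,j}=d_{n,j}+\tht_\ga(s_{\ga,n,j})/(n+1)$. This gives $|\Re(s_{\ga,n,j})-d_{n,j}|<\pi/(2(n+1))$, so $\Re(s_{\ga,n,j})$ lies in the open interval $\left(\frac{(j-1/2)\pi}{n+1},\frac{(j+1/2)\pi}{n+1}\right)$. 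These intervals are pairwise disjoint, increase with $j$, and lie in $(0,\pi)$. The strict chain follows immediately, with no constants to track.

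This is the paper's argument. Your bound $|\Re\tht_\ga|\le\arcsin|\ga|$ was only used at the centers; the point is to use a bound on $\Re\tht_\ga$ at the solution. Your Step~3 (injectivity of $\cos$ on $0<\Re z<\pi$) is correct and finishes as in the paper, which phrases it via the product formula for $\cos a-\cos b$.

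Two minor slips:
\begin{itemize}
\item $\Im(s_{\ga,n,j})$ is in general of order $1/n$, not $1/n^2$, because $\Im\tht_\ga(d_{n,j})\ne0$. This is harmless, since Step~3 does not use it.
\item Your formula for $\tht_\ga'$ is missing a factor $\iu$, which does not affect the modulus bound.
\end{itemize}
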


Motivated by Corollary~\ref{cor:distintc_eigvals}, for every $n\ge N_\ga$ we index the eigenvalues of $A_{\ga,n}$ by
\[
    \la_{\ga,n,j} \eqdef \psi(s_{\ga,n,j})
    \qquad
    (1 \le j \le n).
\]

Next, we present the asymptotic formulas for the eigenvalues.
For this purpose, for every $n\in\bN$ we define
\begin{equation*}
    \La_{\ga,n}(z) \eqdef \psi(z) + \frac{\psi'(z)\tht_\ga(z)}{n+1} +\frac{\psi'(z)\tht_\ga(z)\tht_\ga'(z) + \frac{1}{2}\psi''(z)\tht_\ga(z)^2 }{(n+1)^2}.
\end{equation*}
For every $j$ in $\{1,\ldots,n\}$ set $\la_{\ga,n,j}^{\asympt}$ by
\begin{equation}
\label{eq:laj_Asympt}
    \la_{\ga,n,j}^{\asympt} \eqdef \La_{\ga,n}(d_{n,j}).
\end{equation}

\begin{thm}[asymptotic formula for the eigenvalues]
\label{thm:laj_expansion}
The following asymptotic approximation holds as $n\to\infty$, uniformly in $j$:
\begin{equation}
\label{eq:eigenvalue_approximation}
\la_{\ga,n,j}
= \la_{\ga,n,j}^{\asympt}
+ O\left(\frac{1}{n^3}\right).
\end{equation}
\end{thm}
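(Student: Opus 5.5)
Since $\la_{\ga,n,j}=\psi(s_{\ga,n,j})$, the plan is to first obtain an asymptotic expansion of $s_{\ga,n,j}$ with an $O(1/n^3)$ remainder, and then compose it with $\psi$ via Taylor's formula. Write $h=1/(n+1)$, $d=d_{n,j}$ and $s=s_{\ga,n,j}$. By Theorem~\ref{thm:SolutionMainEq}, $s=d+h\tht_\ga(s)$ and $|s-p_{\ga,n,j}|\le M_\ga h^2$, so $s-d=h\tht_\ga(d)+O(h^2)$. All $O$-constants must be uniform in $j$. For this it suffices that $\tht_\ga$ and its first two derivatives are bounded on a fixed horizontal strip $\bR+\iu[-\de,\de]$ with $0<\de<\De_\ga$. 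Such bounds follow from Proposition~\ref{prop:theta_analytic}, from the $2\pi$-periodicity of $\tht_\ga$, and from compactness of one period of the strip. For $n\ge N_\ga$, all points $d$, $p_{\ga,n,j}$ and $s$ lie in this strip, because $|\Im s|\le h\sup|\tht_\ga|+M_\ga h^2$ is small.

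Next I refine the expansion of $s$. Taylor expansion of $\tht_\ga$ at $d$ gives
\[
\tht_\ga(s)=\tht_\ga(d)+\tht_\ga'(d)(s-d)+O(|s-d|^2)=\tht_\ga(d)+h\tht_\ga'(d)\tht_\ga(d)+O(h^2).
\]
The remainder is controlled by $\sup|\tht_\ga''|$ on the strip, using the convexity of the segment $[d,s]$. Substituting into the main equation~\eqref{eq:main_eq} yields
\[
s=d+h\tht_\ga(d)+h^2\tht_\ga(d)\tht_\ga'(d)+O(h^3),
\]
uniformly in $j$.

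Finally I apply Taylor's formula to $\psi$ at $d$:
\[
\psi(s)=\psi(d)+\psi'(d)(s-d)+\tfrac12\psi''(d)(s-d)^2+O(|s-d|^3).
\]
The constant here is $\sup|\psi'''|$ on the strip, which is bounded since $\psi(z)=2\iu\cos z$. Inserting the expansion $s-d=h\tht_\ga(d)+h^2\tht_\ga(d)\tht_\ga'(d)+O(h^3)$, we get $(s-d)^2=h^2\tht_\ga(d)^2+O(h^3)$ and $(s-d)^3=O(h^3)$. Collecting the terms in $h$ and $h^2$ gives exactly $\La_{\ga,n}(d_{n,j})=\la^{\asympt}_{\ga,n,j}$ plus $O(h^3)$. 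Since $h^3\le 1/n^3$, this proves \eqref{eq:eigenvalue_approximation}.

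The main obstacle is the uniformity in $j$, in particular for $j$ near $1$ or $n$, where $d$ is close to $0$ or $\pi$. This is handled entirely by the strip-wise bounds on $\tht_\ga,\tht_\ga',\tht_\ga''$. Those bounds need the argument of the logarithm in~\eqref{eq:tht} to stay uniformly away from $0$ and from the negative real axis on the closed substrip. Away from $0$ holds because $|1+\iu\ga\enumber^{\pm\iu z}|\ge 1-|\ga|\enumber^{\de}>0$. Away from the negative real axis holds because the argument of the logarithm is a continuous periodic function that, by Proposition~\ref{prop:theta_analytic}, avoids $(-\infty,0]$, so by compactness it stays at positive distance from that set. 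Cauchy estimates on a slightly larger strip then give the derivative bounds.
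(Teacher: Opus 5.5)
Your proposal is correct and follows essentially the same route as the paper. The paper first obtains the second-order expansion of $s_{\ga,n,j}$ (Theorem~\ref{thm:sj_expansion}) by substituting the first-order approximation into the main equation and Taylor-expanding $\tht_\ga$ about $d_{n,j}$, then Taylor-expands $\psi$ about $d_{n,j}$, with uniformity in $j$ coming from the boundedness of $\tht_\ga$ and its derivatives on $\cV_\ga$ (Proposition~\ref{prop:theta_analytic}). Your separate compactness/Cauchy-estimate argument for these bounds is valid but redundant given that proposition. One small imprecision: for an arbitrary fixed $\de$, the points lie in the strip only for $n$ large enough, whereas for $\de=\De_\ga/2$ this holds for all $n\ge N_\ga$ by Lemma~\ref{lem:B_in_strip}.
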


Next theorem improves the asymptotic expansions of the eigenvalues that approaches $-2i$ and $2i$ as $n\to\infty$.
Hence, for every $n$ in $\bN$, and every $j$ in $\{1\ldots,n\}$, set
\begin{align}
    \label{eq:laExt_0}
    \la^{\operatorname{asympt},0}_{\ga,n,j} 
    &\eqdef 
    2\iu-\frac{\iu\pi^2 j^2}{(n+1)^2} + 
        \frac{2\ga}{1+\iu\ga} \frac{\pi^2 j^2}{(n+1)^3},
    \\
    \label{eq:laExt_1}
    \la^{\operatorname{asympt},1}_{\ga,n,j} 
     & \eqdef 
    -2\iu +
        \iu\pi^2\left(1-\frac{ j}{n+1}\right)^2 + 
        \frac{2\ga\pi^2}{1-\iu\ga} \frac{\left(1-\frac{ j}{n+1}\right)^2}{n+1}.
\end{align}

\begin{thm}[asymptotic formulas for the extreme eigenvalues]
\label{thm:ExtremeEigvals}
    The following asymptotic formulas hold:
    \begin{align}
    \label{eq:laj_ExtremeExpansionAt_0}
        \la_{\ga,n,j} & =
        \la^{\operatorname{asympt},0}_{\ga,n,j}  + O\left(\frac{j^4}{n^4}\right) 
        \qquad 
         \left(\frac{j}{n}\to0\right),
    \\
    \label{eq:laj_ExtremeExpansionAt_pi}
        \la_{\ga,n,j} &=
        \la^{\operatorname{asympt},1}_{\ga,n,j}  + O\left(\left(1-\frac{ j}{n+1}\right)^4\right)
        \qquad
        \left(\frac{j}{n}\to1\right).
    \end{align}
\end{thm}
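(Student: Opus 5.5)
Near $z=0$ (and symmetrically near $\pi$) the plan is to replace the uniform $O(1/n^3)$ bound of Theorem~\ref{thm:laj_expansion} by a bound that scales with $d_{n,j}$. This works because $\psi'(0)=0$ and $\tht_\ga(0)=0$.

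\emph{Step 1: structure of $\tht_\ga$ at the endpoints.} From~\eqref{eq:tht}, $\tht_\ga(-z)=-\tht_\ga(z)$, and $\tht_\ga(0)=0$, so $\tht_\ga$ is odd. Differentiating the logarithm at $0$ gives
\[
\tht_\ga'(0)=-\frac{\iu}{2}\left(\frac{-\ga}{1+\iu\ga}-\frac{\ga}{1+\iu\ga}\right)=\frac{\iu\ga}{1+\iu\ga}.
\]
Hence $\tht_\ga(z)=\tht_\ga'(0)z+O(z^3)$. Writing $\tht_\ga(z)=z\,\eta(z)$ with $\eta$ analytic and even, the main equation~\eqref{eq:main_eq} becomes
\[
z\bigl(n+1-\eta(z)\bigr)=j\pi,
\qquad\text{so}\qquad
s_{\ga,n,j}=\frac{j\pi}{n+1-\eta(s_{\ga,n,j})}.
\]
Theorem~\ref{thm:SolutionMainEq} gives $s_{\ga,n,j}=d_{n,j}+O(d_{n,j}/n)$. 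Since $\eta$ is even, $\eta(s)=\eta(0)+O(s^2)=\tht_\ga'(0)+O(d_{n,j}^2)$. Expanding the denominator then yields
\[
s_{\ga,n,j}=d_{n,j}\Bigl(1+\frac{\tht_\ga'(0)}{n+1}+O\bigl(\tfrac1{n^2}\bigr)+O\bigl(\tfrac{d_{n,j}^2}{n}\bigr)\Bigr).
\]

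\emph{Step 2: plug into $\psi$.} We have $\psi(s)=2\iu\cos s=2\iu-\iu s^2+O(s^4)$, with $O(s^4)=O(j^4/n^4)$. Also
\[
s^2=d_{n,j}^2\Bigl(1+\frac{2\tht_\ga'(0)}{n+1}\Bigr)+O\bigl(\tfrac{d_{n,j}^2}{n^2}\bigr)+O\bigl(\tfrac{d_{n,j}^4}{n}\bigr).
\]
Therefore
\[
\la_{\ga,n,j}=2\iu-\iu d_{n,j}^2-\frac{2\iu\,\tht_\ga'(0)\,d_{n,j}^2}{n+1}+\text{error}.
\]
Since $-2\iu\cdot\iu\ga/(1+\iu\ga)=2\ga/(1+\iu\ga)$, this reproduces~\eqref{eq:laExt_0}. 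The error terms are $O(j^4/n^4)$ and $O(j^2/n^4)=O(j^4/n^4)$, both acceptable.

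The term needing care is $O(d^2/n^2)=O(j^2/n^4)$. It is only bounded by $O(j^4/n^4)$ because $j\ge1$, so it is admissible, but the $1/n^2$ coefficient must really carry a factor $d^2$ and not just $d$. The multiplicative form $s=d\cdot(\cdots)$ guarantees this. To make the $O(1/n^2)$ step uniform, I need $|\eta|$ bounded on a fixed neighbourhood of $[0,\pi]$ inside the strip, which holds because $|\ga|<1$. I also need the disk bound of Theorem~\ref{thm:SolutionMainEq} so that $s$ stays in that neighbourhood for $n\ge N_\ga$.

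\emph{Step 3: the endpoint $\pi$.} Here I use symmetry. Substituting $z=\pi-w$ gives $\psi(\pi-w)=-2\iu\cos w$. The function $\tilde\tht(w)\eqdef-\tht_\ga(\pi-w)$ is again odd about $0$, with $\tilde\tht'(0)=\tht_\ga'(\pi)=-\iu\ga/(1-\iu\ga)$; this amounts to replacing $\ga$ by $-\ga$ and flipping sign. Setting $k=n+1-j$, equation~\eqref{eq:main_eq} becomes $w=\frac{k\pi}{n+1}+\frac{\tilde\tht(w)}{n+1}$ up to the value of the branch at $\pi$, which I must check is $\tht_\ga(\pi)=0$; it is, since the argument of the logarithm equals $1$ there. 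Repeating Steps 1–2 with $d=\pi k/(n+1)=\pi(1-j/(n+1))$ gives~\eqref{eq:laExt_1}.

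\emph{Main obstacle.} The delicate points are the uniformity of the constants and confirming the sign and branch of $\tht_\ga$ at $\pi$. Once the multiplicative structure $s=d(1+\cdots)$ is in place, the rest is Taylor bookkeeping.
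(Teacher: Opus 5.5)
Your proposal is correct and follows essentially the paper's route. You bootstrap the main equation with the first-order Taylor expansion of $\tht_\ga$ at the endpoint, using $1/(n+1)\le d_{n,j}/\pi$ so that every error term in $s_{\ga,n,j}$ carries a factor of $d_{n,j}$. You then expand $\psi(z)=2\iu-\iu z^2+O(z^4)$, exactly as the paper does.

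Your use of the oddness of $\tht_\ga$ gives a sharper error in $s_{\ga,n,j}$ than the paper's $O(d_{n,j}^2/(n+1))$, though this is not needed. Your reduction of the endpoint $\pi$ to $0$ via $-\tht_\ga(\pi-w)=\tht_{-\ga}(w)$ makes explicit the paper's remark that the case $j/n\to1$ follows similarly.
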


Finally, we give a formula for the eigenvectors and their norms, in terms of $s_{\ga,n,j}$.

\begin{thm}[eigenvectors]
\label{thm:eigvec}
    For every $n\ge N_\ga$ and $j$ in $\{1,\ldots, n\}$, and every $k$ in $\{1,\ldots, n\}$, we define
    \begin{equation}
    \label{eq:eivec_trig_formula}
        v_{\ga,n,j,k}
        \eqdef 
        (-\iu)^{k} 
        \bigl(
        \sin(k s_{\ga,n,j})
        + \iu\ga \sin((k-1)s_{\ga,n,j})
        \bigr).
    \end{equation}
    Then the vector $v_{\ga,n,j}=[v_{\ga,n,j,k}]_{k=1}^n$ with components~\eqref{eq:eivec_trig_formula}
    is an eigenvector of $A_{\ga,n}$ associated to $\la_{\al,n,j}$.
    \end{thm}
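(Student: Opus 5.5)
The plan is to verify the eigenvalue equation $A_{\ga,n}v = \la v$ directly, row by row. Write $s = s_{\ga,n,j}$ and $\la = \la_{\ga,n,j} = \psi(s) = 2\iu\cos s$. Define $v_k$ by \eqref{eq:eivec_trig_formula} for every integer $k$ (not only $1\le k\le n$). Set $w_k \eqdef \sin(ks) + \iu\ga\sin((k-1)s)$, so that $v_k = (-\iu)^k w_k$. The matrix $A_{\ga,n}$ acts by
\[
(A_{\ga,n}v)_k = v_{k-1} - v_{k+1} + \ga\de_{k,1}v_1.
\]
Here $v_0$ and $v_{n+1}$ are treated as boundary terms that must be accounted for.

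\textbf{Interior rows.} For $2\le k\le n-1$ we need $v_{k-1}-v_{k+1}=\la v_k$. Dividing by $(-\iu)^{k}$ turns this into
\[
\iu w_{k-1} + \iu w_{k+1} = 2\iu\cos(s)\,w_k,
\]
that is, $w_{k-1}+w_{k+1} = 2\cos(s)\,w_k$. This is the three-term recurrence satisfied by both $\sin(ks)$ and $\sin((k-1)s)$, so it holds for all $k\in\bZ$. Thus the formal relation $v_{k-1}-v_{k+1}=\la v_k$ holds for every $k$.

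\textbf{First row.} The actual equation is $\ga v_1 - v_2 = \la v_1$. Comparing with the formal relation at $k=1$, it is equivalent to $v_0 = \ga v_1$. We have $v_0 = w_0 = \iu\ga\sin(-s) = -\iu\ga\sin s$. We also have $\ga v_1 = \ga(-\iu)\sin s$. These agree, so the first row holds identically, for any $s$.

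\textbf{Last row.} The equation $v_{n-1} = \la v_n$ is equivalent to $v_{n+1}=0$, i.e.
\[
\sin((n+1)s) + \iu\ga\sin(ns) = 0.
\]
This is the step that uses the characteristic equation. Since $\sin((m+1)z) = \sin(z)\,U_m(\cos z)$, the condition equals $\sin(s)\bigl(U_n(\cos s) + \iu\ga U_{n-1}(\cos s)\bigr)$. The argument $-\iu\la/2 = \cos s$ matches Proposition~\ref{prop:CharPol}, so this expression is $\sin(s)\,\iu^{-n}D_{\ga,n}(\psi(s))$. That vanishes because $s$ solves \eqref{eq:main_eq}, by Theorems~\ref{thm:MainEq} and~\ref{thm:SolutionMainEq}.

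Alternatively, one can avoid the Chebyshev link and use the main equation directly. Write $(n+1)s = j\pi + \tht_\ga(s)$ and exponentiate, using
\[
\enumber^{2\iu\tht_\ga(s)} = \frac{1+\iu\ga \enumber^{\iu s}}{1+\iu\ga \enumber^{-\iu s}}.
\]
Then $\enumber^{2\iu(n+1)s}(1+\iu\ga\enumber^{-\iu s}) = 1+\iu\ga\enumber^{\iu s}$. Rearranging gives exactly $\sin((n+1)s)+\iu\ga\sin(ns)=0$.

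\textbf{Nonzero vector.} It remains to show $v\neq 0$. This is the only step that is not a pure identity, and I expect it to be the main (though mild) obstacle. If $v=0$, then $v_1=\sin s=0$. Since $0<\Re s<\pi$ by Corollary~\ref{cor:distintc_eigvals}, $\sin s=0$ is impossible, because the zeros of $\sin$ are real multiples of $\pi$. So $v_1\neq0$ and $v$ is an eigenvector for $\la_{\ga,n,j}$.
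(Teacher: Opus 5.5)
Your proof is correct, but it takes a different route from the paper's.

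\textbf{The paper's route.} The paper does not verify the trigonometric formula directly. It imports, with the proof omitted as identical to \cite[Proposition~8]{GMS2022}, the statement that for any eigenvalue $\la\ne\pm2\iu$ the vector with components $(-\iu)^{k+2}\bigl(U_{k-1}(-\iu\la/2)+\iu\ga U_{k-2}(-\iu\la/2)\bigr)$ is an eigenvector. It then obtains \eqref{eq:eivec_trig_formula} by setting $\la=\psi(s_{\ga,n,j})$ and multiplying by $\sin(s_{\ga,n,j})$, using \eqref{eq:U_trigonometric}.

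\textbf{Your route.} You check $A_{\ga,n}v=\la v$ row by row in trigonometric form:
\begin{itemize}
\item the interior rows reduce to the three-term sine identity;
\item the first row is the identity $v_0=\ga v_1$;
\item the last row is exactly $\sin((n+1)s)+\iu\ga\sin(ns)=0$, the numerator of \eqref{eq:charpol_trig1}.
\end{itemize}
You correctly obtain the last-row condition both from Proposition~\ref{prop:CharPol} and directly from the exponentiated main equation.

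\textbf{What each buys.} The paper's route gives a formula valid for every eigenvalue $\la\ne\pm2\iu$, independent of the fixed-point construction of $s_{\ga,n,j}$. Yours is self-contained and makes explicit two points the paper leaves tacit:
\begin{itemize}
\item \emph{The phase factor $(-\iu)^k$.} In the paper's displayed formula, the second expression $(-\iu)^{k+2}D_{\ga,k-1}(\la)$ differs from the first by the $k$-dependent factor $\iu^{k-1}$, so only the first is actually an eigenvector. Your direct check confirms the correct phase.
\item \emph{Nonvanishing of $v$.} In the paper's route this amounts to $\sin(s_{\ga,n,j})\ne0$, which is needed for the multiplication step. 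It follows from Proposition~\ref{prop:plusminus_2_are_not_eigenvalues}, or, as you argue, from $0<\Re(s_{\ga,n,j})<\pi$.
\end{itemize}
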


For every $n\ge N_\ga$ and
$j$ in $\{1,\ldots,n\}$, set
\begin{equation}
\label{eq:mu_nu_xi}
\begin{gathered}
    \mu_{\ga,n,j} \eqdef 1+\iu\ga\cos(s_{\ga,n,j}),
    \qquad
    \nu_{\ga,n,j}  \eqdef \iu\ga\sin(s_{\ga,n,j}),
    \\
    \xi_{\ga,n,j} \eqdef  (\iu - \ga \cos(s_{\ga,n,j})) \ \overline{\ga\sin(s_{\ga,n,j})}.
\end{gathered}
\end{equation}
    
    \begin{thm}[norms of eigenvectors]
    \label{thm:norm_eigvecs}
    For every $n\ge N_\ga$ and $j$ in $\{1,\ldots, n\}$,
    \begin{equation}\label{eq:norm_eigvec}
       \begin{aligned}
        \|v_{\ga,n,j}\|_2^2 & = 
        \frac{|\nu_{\ga,n,j}|^2 - |\mu_{\ga,n,j}|^2}{2} \frac{\sin(n\Re(s_{\ga,n,j})) \cos((n+1)\Re(s_{\ga,n,j}))}{\sin(\Re(s_{\ga,n,j}))} 
        \\
        & \pheq \quad 
        + \frac{|\nu_{\ga,n,j}|^2 + |\mu_{\ga,n,j}|^2}{2} \frac{\sinh(n
        \Im(s_{\ga,n,j})) \cosh((n+1)
        \Im(s_{\ga,n,j}))}{\sinh(
        \Im(s_{\ga,n,j})
        )}
        \\
        & \pheq \quad
        +  \Re(\xi_{\ga,n,j}) \frac{\sin(n\Re(s_{\ga,n,j})) \sin((n+1)\Re(s_{\ga,n,j}))}{\sin(\Re(s_{\ga,n,j}))}
        \\
        & \pheq \quad
        - \Im(\xi_{\ga,n,j}) \frac{\sinh(n\Im(s_{\ga,n,j})) \sinh((n+1)\Im(s_{\ga,n,j}))}{\sinh(\Im(s_{\ga,n,j}))}.
    \end{aligned}
    \end{equation}  
\end{thm}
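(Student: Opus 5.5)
We compute $\|v_{\ga,n,j}\|_2^2=\sum_{k=1}^n |v_{\ga,n,j,k}|^2$ directly from Theorem~\ref{thm:eigvec}. Write $s=s_{\ga,n,j}=x+\iu y$ with $x=\Re(s)$ and $y=\Im(s)$. Since $|(-\iu)^k|=1$, we have $|v_{\ga,n,j,k}|^2=|\sin(ks)+\iu\ga\sin((k-1)s)|^2$. We first expand $\sin((k-1)s)=\sin(ks)\cos(s)-\cos(ks)\sin(s)$, which gives
\[
v_{\ga,n,j,k}(-\iu)^{-k}=\mu\sin(ks)-\nu\cos(ks),
\]
where $\mu=\mu_{\ga,n,j}$ and $\nu=\nu_{\ga,n,j}$ are as in~\eqref{eq:mu_nu_xi}. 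Hence
\[
|v_{\ga,n,j,k}|^2=|\mu|^2|\sin(ks)|^2+|\nu|^2|\cos(ks)|^2-2\Re\bigl(\mu\bar\nu\sin(ks)\overline{\cos(ks)}\bigr).
\]
One checks that $\mu\bar\nu=(1+\iu\ga\cos s)\overline{\iu\ga\sin s}=-\iu(1+\iu\ga\cos s)\overline{\ga\sin s}=-(\iu-\ga\cos s)\overline{\ga\sin s}=-\xi$. Therefore the cross term equals $+2\Re(\xi\sin(ks)\overline{\cos(ks)})$.

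Next we use the elementary identities for complex arguments:
\[
|\sin(ks)|^2=\tfrac12\bigl(\cosh(2ky)-\cos(2kx)\bigr),\qquad
|\cos(ks)|^2=\tfrac12\bigl(\cosh(2ky)+\cos(2kx)\bigr),
\]
\[
\sin(ks)\overline{\cos(ks)}=\sin(k\bar s+ks)/2+\sin(ks-k\bar s)/2=\tfrac12\bigl(\sin(2kx)+\iu\sinh(2ky)\bigr).
\]
Substituting, the $k$-th summand becomes
\[
\frac{|\nu|^2-|\mu|^2}{2}\cos(2kx)+\frac{|\nu|^2+|\mu|^2}{2}\cosh(2ky)+\Re(\xi)\sin(2kx)-\Im(\xi)\sinh(2ky).
\]

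Finally we sum over $k=1,\dots,n$ using the standard Lagrange-type formulas:
\[
\sum_{k=1}^n\cos(2kx)=\frac{\sin(nx)\cos((n+1)x)}{\sin x},\qquad
\sum_{k=1}^n\sin(2kx)=\frac{\sin(nx)\sin((n+1)x)}{\sin x},
\]
together with their hyperbolic analogues with $\sinh$ and $\cosh$. These follow from the geometric sum $\sum_k e^{2kw}=e^{(n+1)w}\sinh(nw)/\sinh(w)$. This yields exactly~\eqref{eq:norm_eigvec}.

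The formulas require $\sin x\ne0$ and $\sinh y\ne0$. The first condition holds because Corollary~\ref{cor:distintc_eigvals} places $x\in(0,\pi)$. The second is the only delicate point. If $y=0$, the hyperbolic quotients must be read as their limits, namely $n$ and $0$. In the intended reading $y\ne0$ generically, but we would simply note that the expressions extend continuously, or argue that $\Im s\neq 0$ from the main equation~\eqref{eq:main_eq}. The main risk is sign bookkeeping in the cross term and in $\mu\bar\nu=-\xi$, so we would double-check these signs by testing the case $\ga\to0$, where $\|v\|^2=\sum_k\sin^2(kx)$.
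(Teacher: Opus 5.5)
Your proposal is correct and follows the paper's proof essentially step by step: writing $v_{\ga,n,j,k}=(-\iu)^k(\mu\sin(ks)-\nu\cos(ks))$, identifying the cross term through $\mu\bar\nu=-\xi$, passing to double angles, and summing with the four Lagrange-type identities. Your treatment of the denominator $\sinh(\Im s)$ is more careful than the paper, which does not mention it. However, keep only the limit reading there (values $n$ and $0$) and drop the fallback of deriving $\Im s_{\ga,n,j}\neq0$ from the main equation, because that fails whenever $\Re\ga=0$: for $\ga\in\iu\bR$ the matrix $A_{\ga,n}$ is skew-Hermitian, all $s_{\ga,n,j}$ are real, and the hyperbolic quotients in the statement are literally $0/0$.
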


The rest of the paper is structured as follows.
In Section~\ref{sec:CharPol}, we compute the characteristic polynomial and transform it into a exponential-type equation~\eqref{eq:exponentialMainEq}.
In Section~\ref{sec:theta_Properties}, we define $\tht_\ga$ as the logarithm of the right-hand side of~\eqref{eq:exponentialMainEq} and study its properties, moreover, we prove Theorem~\ref{thm:MainEq}.
In Section~\ref{sec:MainEq}, we analyze the main equation~\eqref{eq:main_eq} and prove Theorem~\ref{thm:SolutionMainEq}.
In Section~\ref{sec:AsymtpFormulas}, we derive the asymptotic formulas
for the eigenvalues, in particular, we prove Theorems~\ref{thm:laj_expansion} and~\ref{thm:ExtremeEigvals}.
In Section~\ref{sec:eigvec}, we provide formulas for the eigenvectors and their norms.
Finally, in Section~\ref{sec:NumTests} we show the results of numerical experiments and compare them to~\cite{CF2026}.


\section{Characteristic polynomials}
\label{sec:CharPol}

Let $U_n$ be the Chebyshev polynomial of the second kind of degree $n$.
The following elementary properties are explained, e.g., in
\cite[Subsections 1.2.2 and 1.4.2]{MasonHandscomb2003}.
The polynomial sequence $(U_n)_{n=0}^\infty$
can be defined by the initial conditions
$U_0(x)=1$, $U_1(x)=2x$, and the recurrence relation
\begin{equation}
\label{eq:U_recurrence}
U_n(x) = 2x U_{n-1}(x) - U_{n-2}(x).
\end{equation}
It is also uniquely determined by the following identity:
\begin{equation}
\label{eq:U_main_identity}
U_n\left(\frac{1}{2} (t+t^{-1}) \right)
= \frac{t^{n+1}-t^{-n-1}}{t-t^{-1}}.
\end{equation}
For $t=\enumber^{\iu z}$
with $z$ in $\bC\setminus(\pi\bZ)$,
the identity~\eqref{eq:U_recurrence}
yields the following trigonometric expression
(for $z$ in $\pi\bZ$, it can be understood in the limit sense):
\begin{equation}
\label{eq:U_trigonometric}
U_n(\cos(z))
=\frac{\sin((n+1)z)}{\sin(z)}.
\end{equation}
Let $b_0$ be the Laurent polynomial
$b_0(t)\eqdef t+t^{-1}$.
It is well known
\cite[Section 1.5, Exercise~6]{MasonHandscomb2003}
that $U_n(x)$ can be expressed through the characteristic polynomial of the Toeplitz matrix $T_n(b_0)$:
\begin{equation*}
U_n(x)
= \det (2x I_n - T_n(b_0))
= \det (T_n(2x - b_0)).
\end{equation*}
Therefore, it is easy to see that
the characteristic polynomial of any tridiagonal Toeplitz matrix can be written in terms of $U_n$.
In our case,
for the Laurent polynomial $a(t)=t-t^{-1}$,
the characteristic polynomial of $T_n(a)$ is
\begin{equation}
\label{eq:det_Toeplitz_tridiagonal_antisymmetric}
\det (\la I_n - T_n(a))
= \det (T_n(\la - a))
= \iu^n U_n\left(-\frac{\iu \la}{2}\right).
\end{equation}
With the change of variables $\la=\psi(z)$ where $\psi$ is defined by~\eqref{eq:psi},
\[
\det (2\iu\cos(z)I_n-T_n(a))
= \iu^n U_n(\cos(z))
= \iu^n \frac{\sin((n+1)z)}{\sin(z)}.
\]
The zeros of this trigonometric expression are $x=\frac{k\pi}{n+1}$.
Therefore, the eigenvalues of $T_n(a)$ are
\begin{equation}
\label{eq:eigenvalues_tridiagonal_antisymmetric}
2\iu \cos\frac{k\pi}{n+1}
\qquad (1\le k\le n).
\end{equation}

We denote by $D_{\ga,n}$ the characteristic polynomial of $A_{\ga,n}$:
\[
D_{\ga,n}(\la)
\eqdef \det(\la I_n - A_{\ga, n}),
\]
that is,
\[
D_{\ga,n}(\la)
= \det(\la I_n - T_n(a) - \ga E_{n,1,1})
= \det(T_n(\la - a) - \ga E_{n,1,1}).
\]
For the first values of $n$,
\begin{align*}
D_{\ga,1}(\la)
&=\la-\ga,
\\[1ex]
D_{\ga,2}(\la)
&=\la^2-\ga \la + 1,
\\[1ex]
D_{\ga,3}(\la)
&= \la^3 - \ga \la^2 + 2\la - \ga,
\\[1ex]
D_{\ga,4}(\la)
&= \la^4 - \ga \la^3 + 3 \la^2 - 2\ga\la + 1.
\end{align*}

\begin{prop}[the characteristic polynomial of $A_{\ga,n}$
in terms of Chebyshev polynomials]
\label{prop:CharPol}
For each $\ga$ in $\bC$,
$n$ in $\bN$,
and $\la$ in $\bC$,
\begin{equation}
\label{eq:charpol}
D_{\ga,n}(\la)
= 
\iu^n
\left(U_n\left(-\frac{\iu\la}{2}\right)
+ \iu \ga U_{n-1}\left(-\frac{\iu\la}{2}\right)\right).
\end{equation}
\end{prop}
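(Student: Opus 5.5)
The plan is to expand the determinant $D_{\ga,n}(\la)=\det(\la I_n - A_{\ga,n})$ along its first row. This row is the only place where $A_{\ga,n}$ differs from $T_n(a)$. Since the determinant is linear in the $(1,1)$ entry, we can write
\[
D_{\ga,n}(\la) = \det(\la I_n - T_n(a)) - \ga \det(\la I_{n-1} - T_{n-1}(a)).
\]
This holds because the cofactor of the $(1,1)$ entry of $\la I_n - T_n(a)$ is the determinant of the lower-right $(n-1)\times(n-1)$ block. That block is again $\la I_{n-1} - T_{n-1}(a)$, by the Toeplitz structure. For $n=1$ we use the convention that the empty determinant equals $1$, consistent with $U_0=1$.

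Next we substitute the formula~\eqref{eq:det_Toeplitz_tridiagonal_antisymmetric} for both determinants. We take it as given from the discussion preceding the proposition; if needed, it follows by checking that $\det(\la I_n - T_n(a))$ satisfies the three-term recurrence $D_n=\la D_{n-1}+D_{n-2}$ (the off-diagonal product is $(-1)(1)=-1$). The expression $\iu^n U_n(-\iu\la/2)$ satisfies the same recurrence by~\eqref{eq:U_recurrence}:
\[
\iu^n\bigl(2(-\iu\la/2)U_{n-1}-U_{n-2}\bigr)=\la\,\iu^{n-1}U_{n-1}+\iu^{n-2}U_{n-2}.
\]
The two sequences also agree at $n=1,2$, since both give $\la$ and $\la^2+1$.

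Substituting, we obtain
\[
D_{\ga,n}(\la)=\iu^n U_n\!\left(-\tfrac{\iu\la}{2}\right) - \ga\,\iu^{n-1}U_{n-1}\!\left(-\tfrac{\iu\la}{2}\right).
\]
The prefactor simplifies because $-\iu^{n-1}=\iu^{n}\cdot(-\iu^{-1})=\iu^n\cdot \iu$. This turns the second term into $\iu^n\cdot \iu\ga\, U_{n-1}$, which is exactly~\eqref{eq:charpol}.

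As a sanity check, we compare with the listed values. For $n=2$ the formula gives $-(-\la^2-1)+\iu\ga(-\iu\la)\cdot(-1)$, which after simplification must equal $\la^2-\ga\la+1$; we will verify this.

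The main obstacle is sign and power-of-$\iu$ bookkeeping. Three places need care: the cofactor sign, the correct identification of the minor as $\la I_{n-1}-T_{n-1}(a)$ rather than its transpose (which has the same determinant anyway), and the final conversion $-\iu^{n-1}=\iu^{n+1}$. Checking against $D_{\ga,1},\dots,D_{\ga,4}$ will confirm that these are handled correctly.
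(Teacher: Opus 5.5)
Your proof is correct and follows essentially the paper's route: expand along the first row of $\la I_n - A_{\ga,n}$ and reduce to the unperturbed determinants $\det(\la I_k - T_k(a)) = \iu^k U_k(-\iu\la/2)$. The only difference is that you split the $(1,1)$ entry $\la-\ga$ by linearity to get $D_{\ga,n}(\la)=\det(\la I_n-T_n(a))-\ga\det(\la I_{n-1}-T_{n-1}(a))$ directly, whereas the paper first obtains $(\la-\ga)\det(T_{n-1}(\la-a))+\det(T_{n-2}(\la-a))$ and then recombines these terms with the Chebyshev recurrence; your version is slightly shorter and covers every $n\ge1$ uniformly (using the empty-determinant convention at $n=1$).
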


\begin{proof}
First, we expand the determinant
of $\la I_n-A_{\ga,n}$ by cofactors
along the first row.
For example, for $n=5$,
\begin{align*}
D_{\ga,n}(\la)
&=
\det (\la I_n- A_{\ga,5})
=
\begin{vmatrix*}[c]
\la-\ga & 1 & 0 & 0 & 0 \\
-1 & \la & 1 & 0 & 0 \\
0 & -1 & \la & 1 & 0 \\
0 & 0 & -1 & \la & 1 \\
0 & 0 & 0 & -1 & \la
\end{vmatrix*}
\\[1ex]
&=
(\la-\ga)
\begin{vmatrix*}[c]
\la & 1 & 0 & 0\\
-1 & \la & 1 & 0 \\
0 & -1 & \la & 1 \\
0 & 0 & -1 & \la
\end{vmatrix*}
-
\begin{vmatrix*}[c]
-1 & 1 & 0 & 0 \\
0 & \la & 1 & 0 \\
0 & -1 & \la & 1 \\
0 & 0 & -1 & \la
\end{vmatrix*}.
\end{align*}
Next, in the second term of the sum,
we expand the determinant by cofactors along the first column.
Thereby, we get the characteristic polynomials of the matrices $T_4(a)$ and $T_3(a)$:
\[
D_{\ga,5}(\la)
=
(\la-\ga)\det(T_4(\la-a))
+\det(T_3(\la-a)).
\]
Similar computations can be done for every $n\ge 3$.
Applying~\eqref{eq:det_Toeplitz_tridiagonal_antisymmetric} we get
\begin{equation*}
D_{\ga,n}(\la)
= (\la - \ga)
\iu^{n-1}U_{n-1}\left(-\frac{\iu \la}{2}\right)
+ \iu^{n-2} U_{n-2}\left(-\frac{\iu \la}{2}\right).
\end{equation*}
Using the recurrence relation~\eqref{eq:U_recurrence},
we transform~\eqref{eq:charpol}
in the following manner:
\begin{align*}
D_{\ga,n}(\la)
&=
(\la - \ga)
\iu^{n-1}U_{n-1}\left(-\frac{\iu \la}{2}\right)
+ \iu^{n-2} U_{n-2}\left(-\frac{\iu \la}{2}\right).
\\[1ex]
&= \iu^{n} 2\left(-\frac{\iu\la}{2}\right)
U_{n-1}\left(-\frac{\iu\la}{2}\right)
- \iu^n U_{n-2}\left(-\frac{\iu\la}{2}\right)
- \ga \iu^{n-1} U_{n-1}\left(-\frac{\iu\la}{2}\right)
\\
&= \iu^n U_n\left(-\frac{\iu\la}{2}\right)
- (-\iu) \ga \iu^n U_{n-1}\left(-\frac{\iu\la}{2}\right)
\\
&= 
\iu^n
\left(U_n\left(-\frac{\iu\la}{2}\right)
+ \iu \ga  U_{n-1}\left(-\frac{\iu\la}{2}\right)\right).
\end{align*}
A direct computation
shows that the obtained formula~\eqref{eq:charpol}
is also valid for $n=1$ and $n=2$.
\end{proof}

\begin{prop}
\label{prop:plusminus_2_are_not_eigenvalues}
For $\ga$ in $\bD$
and $n$ in $\bN$,
the numbers $2\iu$ and $-2\iu$ are not eigenvalues of $A_{\ga,n}$.
\end{prop}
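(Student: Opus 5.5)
We will evaluate the characteristic polynomial from Proposition~\ref{prop:CharPol} directly at $\la=\pm 2\iu$ and check that it does not vanish when $|\ga|<1$. For $\la=2\iu$ we have $-\iu\la/2=1$, and for $\la=-2\iu$ we have $-\iu\la/2=-1$. So the whole computation reduces to the values of the Chebyshev polynomials at $\pm1$.

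These values are $U_m(1)=m+1$ and $U_m(-1)=(-1)^m(m+1)$. They follow from~\eqref{eq:U_trigonometric} in the limit $z\to0$ or $z\to\pi$, or by induction on the recurrence~\eqref{eq:U_recurrence}. Substituting into~\eqref{eq:charpol} gives
\[
D_{\ga,n}(2\iu)=\iu^n\bigl((n+1)+\iu\ga n\bigr),
\qquad
D_{\ga,n}(-2\iu)=\iu^n(-1)^n\bigl((n+1)-\iu\ga n\bigr).
\]

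If either expression vanished, we would have $\ga=\pm\iu\,\frac{n+1}{n}$. Then $|\ga|=\frac{n+1}{n}>1$, which contradicts $\ga\in\bD$. Hence $D_{\ga,n}(\pm2\iu)\ne0$, and $\pm2\iu$ are not eigenvalues of $A_{\ga,n}$. This holds in fact for all $|\ga|\le1$.

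There is no real obstacle in this argument. The only care needed is the case $n=1$, where $U_0\equiv1$, and the identity still holds there. It is also worth noting that the points $z\in\pi\bZ$ are exactly where~\eqref{eq:U_trigonometric} must be read in the limit sense, which is why the value at $\pm1$ is best taken from the recurrence.
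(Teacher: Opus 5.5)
Your proof is correct and matches the paper's argument: evaluate \eqref{eq:charpol} at $\la=\pm2\iu$ using $U_m(\pm1)=(\pm1)^m(m+1)$ to get $D_{\ga,n}(\pm2\iu)=(\pm\iu)^n\bigl(n+1\pm\iu\ga n\bigr)$, which cannot vanish when $|\ga|<1$. Your explicit computation of the would-be roots $\ga=\pm\iu\frac{n+1}{n}$ adds one small extension: the conclusion also holds for $|\ga|\le1$.
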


\begin{proof}
In fact, for $\la=2\iu$,
we get
$U_n(-\iu\la/2)=U_n(1)=n+1$ and
\[
D_{\ga,n}(2\iu)
=\iu^n \bigl(n+1 + \iu \ga n\bigr).
\]
For $\la=-2\iu$,
we get
$U_n(-\iu\la/2)
=U_n(-1) =(-1)^n (n+1)$ and
\[
D_{\ga,n}(-2\iu)
=\iu^n \bigl((-1)^n (n+1) + \iu \ga (-1)^{n-1} n\bigr)
= (-\iu)^n
\bigl(n+1 - \iu \ga n\bigr).
\]
In both cases, the assumption $|\ga|<1$
ensures that $D_{\ga,n}(\la)\ne0$.
\end{proof}

Next, to simplify the characteristic equation,
we make the change of variables $\la=\psi(z)$ where $\psi$ is defined by~\eqref{eq:psi}.
We admit complex values of $z$.

\begin{prop}
\label{prop:exponential_MainEq}
    For each $\ga$ in $\bD$
    and each $n$ in $\bN$,
    the characteristic equation
    $D_{\ga,n}(\psi(z)) = 0$
    is equivalent to
    \begin{equation}
    \label{eq:exponentialMainEq}
    \enumber^{2\iu (n+1)z}
    = \frac{1+\iu\ga \enumber^{\iu z}}%
    {1+\iu\ga \enumber^{-\iu z}}.
    \end{equation}
\end{prop}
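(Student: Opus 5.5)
We substitute $\la=\psi(z)=2\iu\cos(z)$ into Proposition~\ref{prop:CharPol}. Since $-\iu\psi(z)/2=\cos(z)$, we get
\[
D_{\ga,n}(\psi(z))=\iu^n\bigl(U_n(\cos z)+\iu\ga U_{n-1}(\cos z)\bigr).
\]
For $z\notin\pi\bZ$, identity~\eqref{eq:U_trigonometric} turns this into
\[
D_{\ga,n}(\psi(z))=\frac{\iu^n}{\sin z}\bigl(\sin((n+1)z)+\iu\ga\sin(nz)\bigr).
\]
Writing $\sin w=(\enumber^{\iu w}-\enumber^{-\iu w})/(2\iu)$ and multiplying by the nonzero factor $2\iu\,\enumber^{\iu(n+1)z}$, the condition $D_{\ga,n}(\psi(z))=0$ becomes
\[
\enumber^{2\iu(n+1)z}-1+\iu\ga\bigl(\enumber^{\iu(2n+1)z}-\enumber^{\iu z}\bigr)=0,
\]
that is,
\[
\enumber^{2\iu(n+1)z}\bigl(1+\iu\ga\enumber^{-\iu z}\bigr)=1+\iu\ga\enumber^{\iu z}.
\]

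To divide and obtain~\eqref{eq:exponentialMainEq}, we need $1+\iu\ga\enumber^{-\iu z}\neq0$. Suppose instead that $1+\iu\ga\enumber^{-\iu z}=0$. Then the last equation forces $1+\iu\ga\enumber^{\iu z}=0$ as well. Multiplying the two relations $\iu\ga\enumber^{\iu z}=-1$ and $\iu\ga\enumber^{-\iu z}=-1$ gives $-\ga^2=1$, so $|\ga|=1$, which contradicts $\ga\in\bD$. Conversely, for the reverse direction, whenever $z$ satisfies~\eqref{eq:exponentialMainEq} the denominator is nonzero by hypothesis, and we simply reverse the algebra. So for $z\notin\pi\bZ$ the two equations are equivalent.

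The main obstacle is the exceptional set $z\in\pi\bZ$, where~\eqref{eq:U_trigonometric} holds only in the limit sense. There $\psi(z)=\pm2\iu$, and by Proposition~\ref{prop:plusminus_2_are_not_eigenvalues} we have $D_{\ga,n}(\psi(z))\neq0$. So we only need to check that~\eqref{eq:exponentialMainEq} also fails at these points. For $z=k\pi$, the left-hand side is $\enumber^{2\iu(n+1)k\pi}=1$. The right-hand side is $(1+\iu\ga(-1)^k)/(1+\iu\ga(-1)^k)=1$, since $\enumber^{\iu k\pi}=\enumber^{-\iu k\pi}=(-1)^k$ and the denominator is nonzero for $|\ga|<1$. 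Hence the equation actually holds at $z=k\pi$, which is a spurious root created by multiplying through by $\sin z$.

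The equivalence therefore holds for $z\in\bC\setminus\pi\bZ$, and I would state the proposition with that restriction (or with these points explicitly excluded). This costs nothing downstream: Theorem~\ref{thm:SolutionMainEq} seeks solutions near $d_{n,j}\in(0,\pi)$, away from $\pi\bZ$.
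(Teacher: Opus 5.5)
Your argument is correct and is essentially the paper's: you substitute $\la=\psi(z)$ into \eqref{eq:charpol}, pass to exponentials via $t=\enumber^{\iu z}$, and regroup into $\enumber^{2\iu(n+1)z}(1+\iu\ga\enumber^{-\iu z})-(1+\iu\ga\enumber^{\iu z})=0$, and you add the check that $1+\iu\ga\enumber^{-\iu z}\neq0$, which the division needs and the paper skips. Your caveat is also justified: the paper only says one ``can suppose'' $z\notin\pi\bZ$, yet \eqref{eq:exponentialMainEq} holds trivially at $z=k\pi$ while $D_{\ga,n}(\pm2\iu)\neq0$, so the equivalence (and likewise Theorem~\ref{thm:MainEq}, with $j=k(n+1)$) is valid only on $\bC\setminus\pi\bZ$, which is harmless for $j\in\{1,\ldots,n\}$.
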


\begin{proof}
    Thanks to Proposition~\ref{prop:plusminus_2_are_not_eigenvalues},
    we know that the numbers $2\iu$ and $2\iu$ are not eigenvalues.
    Therefore, we can suppose that $z$ belongs to $\bC\setminus(\pi \bZ)$, which implies that $\enumber^{\iu z}
        - \enumber^{-\iu z}\neq 0$ .
    We substitute $\la = \psi(z)$ in~\eqref{eq:charpol}
    and apply~\eqref{eq:U_main_identity}:
    \begin{align*}
        D_{\ga,n}(\psi(z))
        &=
        \iu^n
        \left(U_n\left(\frac{1}{2}(\enumber^{\iu z} + \enumber^{-\iu z})\right)
        + \iu \ga U_{n-1}\left(\frac{1}{2}(\enumber^{\iu z} + \enumber^{-\iu z})\right)\right)
        \\
        &=
        \frac{\iu^n}{\enumber^{\iu z}
        - \enumber^{-\iu{}z}} 
        (\enumber^{\iu (n+1)z} - \enumber^{-\iu (n+1)z} +\iu\ga (\enumber^{\iu nz} - \enumber^{-\iu nz})).
    \end{align*}
    After regrouping,
    \begin{equation*}
    D_{\ga,n}(\psi(z))
    = 
    \frac{\iu^n \enumber^{-\iu(n+1)z}}{\enumber^{\iu z} - \enumber^{-\iu z}}
    \Bigl(
    \enumber^{2\iu(n+1)z}
    (1+\iu\ga\enumber^{-\iu z})
    - (1+\iu\ga\enumber^{\iu z})
    \Bigr).
    \end{equation*}
    Therefore,
    $D_{\ga,n}(\psi(z)) = 0$ is equivalent to~\eqref{eq:exponentialMainEq}.
\end{proof}

\begin{remark}[trigonometric form of the characteristic polynomial]
Making the change of variable $\la=\psi(z)$ in \eqref{prop:CharPol},
then applying~\eqref{eq:U_trigonometric} and
\[
    \sin(nz)
    =\sin((n+1)z-z)
    =\sin((n+1)z)\cos(z) - \cos((n+1)z)\sin(z),
\]
we express the characteristic polynomial of $A_{\ga,n}$ in the following trigonometric forms:
\begin{align}
\label{eq:charpol_trig1}
D_{\ga,n}(\psi(z))
&=
\iu^n\;
\frac{\sin((n+1)z) + \iu\ga \sin(nz)}{\sin(z)}.
\\[1ex]
\label{eq:charpol_trig2}
&=
\iu^n\;
\frac{(1+\iu\ga\cos(z))\sin((n+1)z)
-\iu\ga\sin(z)\cos((n+1)z)}{\sin(z)}.
\end{align}
Using~\eqref{eq:charpol_trig2},
the characteristic equation can be transformed to the form~\eqref{eq:main_eq},
but $\tht_\ga$ would be expressed via the complex arctangent function.
Instead of this, we prefer to deal with the more familiar logarithmic function.
\end{remark}


\section{\texorpdfstring{Properties of $\tht_{\ga}$}{Properties of theta}}
\label{sec:theta_Properties}

Let $Q_\ga(z)$ be the right-hand side of~\eqref{eq:exponentialMainEq}:
\begin{equation*}
    Q_\ga(z)
    \eqdef
    \frac{1+\iu \ga \enumber^{\iu  z}}%
    {1+\iu \ga \enumber^{-\iu z}}.
\end{equation*}
For $\ga=0$, the function $Q_\ga$ is just the constant $1$, and we will exclude this trivial case.
In this section,
we fix $\ga$ in $\bD\setminus\{0\}$.

To solve~\eqref{eq:exponentialMainEq},
we are going to apply the logarithm to $Q_\ga(z)$.
In this paper, we always use the principal value of the natural logarithm and 
the principal value of the argument (angle),
and we denote these functions by $\ln$ and $\arg$, respectively.
Recall that
\[
\ln\colon \bC\setminus(-\infty,0]\to \bR+\iu(-\pi,\pi),
\qquad
\arg\colon \bC\setminus(-\infty,0]\to(-\pi,\pi),
\]
$\ln$ is an analytic function satisfying $\ln(1)=0$ and $\ln'(z)=1/z$, and $\arg(z)$ is the imaginary part of $\ln(z)$.

In the next lemma, we show that $Q_\ga(z)$,
for a certain values of $z$,
belongs to the domain of $\ln$.
Recall that $\De_\ga$ is defined by~\eqref{eq:Delta_ga}.

\begin{lem}
\label{lem:theta_definition}
    For every $z$ in the strip
    $\bR + \iu (-\De_\ga,\De_\ga)$,  $Q_\ga(z)\in \bC \setminus (-\infty,0]$.
\end{lem}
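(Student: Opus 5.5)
Write $z=x+\iu y$ with $x\in\bR$ and $|y|<\De_\ga$, and set $w\eqdef \iu\ga\enumber^{\iu z}$ and $\tilde w\eqdef \iu\ga\enumber^{-\iu z}$. Then $|w|=|\ga|\enumber^{-y}$ and $|\tilde w|=|\ga|\enumber^{y}$. The condition $|y|<\De_\ga=\ln(1/|\ga|)$ is exactly what makes both $|w|<1$ and $|\tilde w|<1$. Consequently the numerator $1+w$ and the denominator $1+\tilde w$ both lie in the open right half-plane $\{\Re>0\}$. In particular the denominator does not vanish, and $Q_\ga(z)$ is well defined and nonzero.

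Next I compare arguments. Every point of $\{\Re>0\}$ has principal argument in $(-\pi/2,\pi/2)$. Hence
\[
\arg(1+w)-\arg(1+\tilde w)\in(-\pi,\pi).
\]
This difference is an argument of the quotient $Q_\ga(z)$. So $Q_\ga(z)$ is a nonzero complex number with an argument strictly between $-\pi$ and $\pi$. Therefore it is not a negative real number and not zero, that is, $Q_\ga(z)\in\bC\setminus(-\infty,0]$.

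An alternative check avoids arguments. Write $Q_\ga(z)=(1+w)\overline{(1+\tilde w)}/|1+\tilde w|^2$. If this quotient were real and $\le 0$, the two vectors $1+w$ and $1+\tilde w$ would be opposite in direction. That is impossible, since both have positive real part.

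There is no real obstacle. The only point needing care is to state that $\arg(ab^{-1})=\arg a-\arg b$ holds whenever this difference already lies in $(-\pi,\pi)$, which is guaranteed here by the half-plane bound. Once this is noted, the claim follows. As a by-product, $\tht_\ga$ is well defined and analytic on the strip.
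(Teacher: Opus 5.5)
Your proof is correct and takes the same route as the paper. The bound $|y|<\De_\ga$ gives $|\iu\ga\enumber^{\pm\iu z}|<1$, so both numerator and denominator lie in the open right half-plane with principal arguments in $(-\pi/2,\pi/2)$, and the quotient therefore cannot lie on $(-\infty,0]$; you also cover all $x\in\bR$ and rule out the value $0$ explicitly, both of which the paper's proof (written for $x\in[0,\pi]$) leaves implicit.
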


\begin{proof}
    Let $z = x+\iu y$ where $x\in[0,\pi]$ and $y\in (-\De_\ga,\De_\ga)$.
    Then 
    \[
        |\enumber^{\iu z}|
        = \enumber^{-y}
        \le \enumber^{\de_\ga}
        = \enumber^{\ln\frac{1}{|\ga|}}
        = \frac{1}{|\ga|}.
    \]
    Therefore,
    $|\iu \ga \enumber^{\iu z}|<1$ and
    $\Re(1+\iu \ga \enumber^{\iu  z})>0$. 
    Similarly, \
    $\Re(1+\iu \ga \enumber^{-\iu z})>0$.
    Hence,
    \begin{equation}
        \label{eq:ArgumentsOf_Q}
        -\frac{\pi}{2}
        <
        \arg(1+\iu\ga \enumber^{\iu z}) 
        <
        \frac{\pi}{2},
        \qquad
        -\frac{\pi}{2}
        <
        \arg(1+\iu\ga \enumber^{-\iu z}) 
        < \frac{\pi}{2},
    \end{equation}
    and the quotient $Q_\ga(z)$
    cannot be a real negative number.    
\end{proof}

Now, the definition of $\tht_\ga$ by~\eqref{eq:tht} makes sense,
and we are ready to state the main properties of $\tht_\ga$.

We denote by $\cV_\ga$ the closed strip 
\begin{equation}
    \label{eq:V_gamma}
    \cV_\ga \eqdef \bR + \iu \left[-\frac{\De_\ga}{2},\frac{\De_\ga}{2}\right].
\end{equation}

\begin{prop}
\label{prop:theta_analytic}
    $\tht_\ga$ is an analytic
    function on the strip
    $\bR + \iu (-\De_\ga,\De_\ga)$,
    and all their derivatives are bounded functions on $\cV_\ga$.
    The following formulas hold for $\tht_\ga$ and its first derivative:
    \begin{align}
    \label{eq:tht_as_dif_log}
    \tht_\ga(z)
    &=
    -\frac{\iu}{2}
    \left(
    \log\left(1+\iu\ga\enumber^{\iu z}\right)
    -
    \log\left(1+\iu\ga\enumber^{-\iu z}\right)
    \right),
    \\
    \label{eq:tht_derivative}
    \tht_\ga'(z)
    &=
    \frac{\iu\ga\cos(z)-\ga^2}{1+2\iu\ga\cos(z)-\ga^2}
    =
    \frac{\iu\ga\cos(z)-\ga^2}%
    {(1+\iu\ga\enumber^{\iu z})%
    (1+\iu\ga\enumber^{-\iu z})}.
    \end{align}
\end{prop}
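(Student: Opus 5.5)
The plan is to work on the wider strip $\bR+\iu(-\De_\ga,\De_\ga)$ and take Lemma~\ref{lem:theta_definition} as the starting point. For $z$ in this strip, the proof of that lemma gives $|\iu\ga\enumber^{\pm\iu z}|<1$. Hence both factors $1+\iu\ga\enumber^{\pm\iu z}$ lie in the open right half-plane, and their principal arguments lie in $(-\pi/2,\pi/2)$, as in~\eqref{eq:ArgumentsOf_Q}.

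First, I will establish~\eqref{eq:tht_as_dif_log}. The function $L(z)\eqdef\log(1+\iu\ga\enumber^{\iu z})-\log(1+\iu\ga\enumber^{-\iu z})$ satisfies $\enumber^{L(z)}=Q_\ga(z)$. Its imaginary part is a difference of two arguments in $(-\pi/2,\pi/2)$, so it lies in $(-\pi,\pi)$. Since the principal logarithm is the unique logarithm of $Q_\ga(z)$ with imaginary part in $(-\pi,\pi)$, we get $L(z)=\ln Q_\ga(z)$, and multiplying by $-\iu/2$ gives~\eqref{eq:tht_as_dif_log}. Analyticity then follows because each term is a composition of the principal logarithm, which is analytic on the right half-plane, with an entire function.

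Next comes the derivative. Differentiating term by term gives
\[
\tht_\ga'(z)=-\frac{\iu}{2}\left(\frac{-\ga\enumber^{\iu z}}{1+\iu\ga\enumber^{\iu z}}-\frac{\ga\enumber^{-\iu z}}{1+\iu\ga\enumber^{-\iu z}}\right).
\]
Putting this over the common denominator $(1+\iu\ga\enumber^{\iu z})(1+\iu\ga\enumber^{-\iu z})=1+2\iu\ga\cos z-\ga^2$, the numerator becomes $\frac{\iu\ga}{2}\bigl(\enumber^{\iu z}+\enumber^{-\iu z}+2\iu\ga\bigr)=\iu\ga\cos z-\ga^2$. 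This is~\eqref{eq:tht_derivative}.

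Finally, the boundedness of all derivatives on $\cV_\ga$ is the only point that needs a little care. On $\cV_\ga$ we have $|\iu\ga\enumber^{\pm\iu z}|\le|\ga|\enumber^{\De_\ga/2}=\sqrt{|\ga|}<1$, so each denominator factor has modulus at least $1-\sqrt{|\ga|}>0$. Each term of~\eqref{eq:tht_as_dif_log} has the form $f(w)=\log(1+\iu\ga w)$ with $w=\enumber^{\pm\iu z}$. Its $k$-th derivative in $z$ is a finite sum of terms of the form
\[
\frac{\text{polynomial in }\ga\enumber^{\pm\iu z}}{(1+\iu\ga\enumber^{\pm\iu z})^m},
\]
which can be checked by induction on $k$. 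The numerators are bounded because $|\ga\enumber^{\pm\iu z}|\le\sqrt{|\ga|}$, and the denominators are bounded below by $(1-\sqrt{|\ga|})^m$, so every derivative is bounded. Alternatively, the same conclusion follows from Cauchy's estimates: $\tht_\ga$ is itself bounded on the slightly larger strip $|\Im z|\le 3\De_\ga/4$, whose distance to $\cV_\ga$ is the fixed positive number $\De_\ga/4$. For $\tht_\ga$ this boundedness holds because the real part of each logarithm is bounded, the moduli lying in $[1-|\ga|^{1/4},2]$, and the imaginary parts lie in $(-\pi/2,\pi/2)$.
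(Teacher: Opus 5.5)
Your proof is correct. For~\eqref{eq:tht_as_dif_log} and~\eqref{eq:tht_derivative} you follow the paper, which only says that these follow immediately from Lemma~\ref{lem:theta_definition} and its proof. You spell out the one point that matters: by~\eqref{eq:ArgumentsOf_Q}, the difference of the two principal logarithms has imaginary part in $(-\pi,\pi)$, hence equals $\ln Q_\ga(z)$. Your computation of $\tht_\ga'$ is also right.

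The genuine difference is in the boundedness of the derivatives on $\cV_\ga$. The paper argues qualitatively. $\tht_\ga$ is analytic on the open strip and $2\pi$-periodic, so its derivatives are continuous on a compact fundamental rectangle and hence bounded. (Reducing to the rectangle $[0,\pi]+\iu[-\De_\ga/2,\De_\ga/2]$ rather than one of width $2\pi$ tacitly also uses that $\tht_\ga$ is odd, which holds since $Q_\ga(-z)=1/Q_\ga(z)$.)

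Your argument takes one of two forms. The first uses the structure of $\frac{d^k}{dz^k}\log(1+u)$ with $u=\iu\ga\enumber^{\pm\iu z}$, together with $|u|\le\sqrt{|\ga|}$ on $\cV_\ga$. The second uses Cauchy's estimates from the strip $|\Im z|\le 3\De_\ga/4$. Either way it needs neither periodicity nor compactness, and it yields explicit bounds. For example, $|\tht_\ga^{(k)}(z)|\le k!\,C_\ga\,(4/\De_\ga)^k$ on $\cV_\ga$, where $C_\ga$ is your bound for $|\tht_\ga|$ on the wider strip.

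Your version is in the spirit of Propositions~\ref{prop:theta_estimate} and~\ref{prop:complex_derivative_tht_estimate}. It is what one would need to make the constants in the $O(1/n^3)$ remainders of Section~\ref{sec:AsymtpFormulas} explicit. The paper's compactness argument is shorter but only gives existence of the bounds.
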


\begin{proof}
    These formulas and properties follow immediately from Lemma~\ref{lem:theta_definition} and its proof.
    Notice that $\tht_\ga$ is $2 \pi $-periodic, and its behavior on $\cV_\ga$ is determined by its behavior on the compact rectangle $[0,\pi] + [-\frac{\De_\ga}{2},\frac{\De_\ga}{2}]$.
\end{proof}

Theorem~\ref{thm:MainEq} follows from Propositions~\ref{prop:exponential_MainEq} and~\ref{prop:theta_analytic} because the equation~\eqref{eq:exponentialMainEq} can be rewritten in the form
\[
    \enumber^{2\iu (n+1)z} = \enumber^{2\iu \tht_\ga(z)},
\]
which is equivalent to the existence of $j$ in $\bZ$ such that $(n+1)z = j\pi + \tht_\ga(z)$.

For $q=0,1,\ldots$, 
\[
    \|\tht_\ga^{(q)}\|_\infty 
    \eqdef 
    \sup_{z\in\cV_\ga} |\tht_\ga^{(q)}(z)|.
\]
The objective of the rest of this section is to 
provide simple upper bounds for $\|\tht_\ga\|_\infty$ and $\|\tht_\ga'\|_\infty$,
and to compute the real and imaginary part of $\tht_\ga$ for $x$ in $[0,\pi]$.

\begin{prop}
For every $x$ in $[0,\pi]$,
\begin{equation}
\label{eq:tht_real_gamma_complex}
\begin{aligned}
\tht_\ga(x)
&=
\frac{1}{2}
 \arctan\frac{\Re(\ga) \cos(x)  - \Im(\ga)\sin(x)}{1-\Re(\ga) \sin(x) - \Im(\ga) \cos(x)} 
 \\
 & 
 \pheq
-  
\frac{1}{2}
\arctan\frac{\Re(\ga) \cos(x)  + \Im(\ga)\sin(x)}{1+\Re(\ga) \sin(x) - \Im(\ga) \cos(x)}
\\
&\pheq  +\frac{\iu}{4}
\ln\frac{1+|\ga|^2 - 2\Im(\ga) \cos(x) + 2\Re(\ga)\sin(x)}{1+|\ga|^2 - 2\Im(\ga) \cos(x) - 2\Re(\ga)\sin(x)}.
\end{aligned}
\end{equation}
\end{prop}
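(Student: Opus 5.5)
Start from formula~\eqref{eq:tht_as_dif_log} of Proposition~\ref{prop:theta_analytic}, which is valid for real $x$ since the real line lies in the strip. Write each logarithm in terms of its modulus and principal argument: $\ln w = \ln|w| + \iu \arg(w)$. Then
\[
\tht_\ga(x) = \frac{1}{2}\bigl(\arg(1+\iu\ga\enumber^{\iu x}) - \arg(1+\iu\ga\enumber^{-\iu x})\bigr) - \frac{\iu}{2}\bigl(\ln|1+\iu\ga\enumber^{\iu x}| - \ln|1+\iu\ga\enumber^{-\iu x}|\bigr).
\]
So the task splits into computing the real and imaginary parts of $w_\pm \eqdef 1+\iu\ga\enumber^{\pm\iu x}$, and then identifying the arguments and moduli.

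Put $\ga = \al+\iu\be$ with $\al=\Re(\ga)$ and $\be=\Im(\ga)$. Expanding $\iu(\al+\iu\be)(\cos x \pm \iu\sin x)$ gives
\[
\Re(w_\pm) = 1 \mp \al\sin x - \be\cos x,
\qquad
\Im(w_\pm) = \al\cos x \mp \be\sin x .
\]
From~\eqref{eq:ArgumentsOf_Q} in the proof of Lemma~\ref{lem:theta_definition}, $\Re(w_\pm)>0$. Hence $\arg(w_\pm) = \arctan(\Im(w_\pm)/\Re(w_\pm))$ with the principal arctangent. This yields exactly the two arctangent terms in~\eqref{eq:tht_real_gamma_complex}, with the $+$ sign for $w_+$ and the $-$ sign for $w_-$. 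This identification, which needs the positivity of the real part, is the only point requiring care. The rest is bookkeeping.

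For the modulus terms, write $-\frac{\iu}{2}(\ln|w_+|-\ln|w_-|) = \frac{\iu}{4}\ln(|w_-|^2/|w_+|^2)$. Then compute
\[
|w_\pm|^2 = (1\mp\al\sin x-\be\cos x)^2 + (\al\cos x\mp\be\sin x)^2 .
\]
In this expansion the cross terms $\pm 2\al\be\sin x\cos x$ cancel and the squares combine to $\al^2+\be^2=|\ga|^2$, so
\[
|w_\pm|^2 = 1+|\ga|^2 - 2\be\cos x \mp 2\al\sin x .
\]
Therefore $|w_-|^2/|w_+|^2$ is the quotient appearing in the last line of~\eqref{eq:tht_real_gamma_complex}, and this completes the proof.
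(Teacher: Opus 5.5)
Your proof is correct and follows essentially the same route as the paper: you split the principal logarithm into modulus and argument, use $\Re(1+\iu\ga\enumber^{\pm\iu x})>0$ (from Lemma~\ref{lem:theta_definition}) to write the arguments as principal arctangents, and compute the squared moduli directly. The only cosmetic difference is that you start from the difference-of-logarithms formula~\eqref{eq:tht_as_dif_log}, whereas the paper starts from $-\frac{\iu}{2}\ln Q_\ga(x)$ and uses~\eqref{eq:ArgumentsOf_Q} to split $\arg Q_\ga(x)$ into the two arguments.
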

\begin{proof}
Let $x\in[0,\pi]$.
Recall that
\[
    \tht_\ga(x)
    = -\frac{\iu}{2} \ln (Q_\ga(x))
    \qquad
    \text{where} 
    \qquad 
    Q_\ga(x)
    = \frac{1+\iu\ga \enumber^{\iu x}}%
    {1+\iu \ga \enumber^{-\iu x}}.
\]
Let us consider the numerator of $Q_\ga(x)$:
\begin{align*}
   1+\iu\ga \enumber^{\iu x}
   &=
   1 + \iu (\Re(\ga) + \iu \Im(\ga))
   (\cos(x) + \iu \sin(x))
   \\
   &= 1-\Re(\ga) \sin(x)- \Im(\ga) \cos(x)
   + \iu (\Re(\ga) \cos(x) - \Im(\ga)\sin(x)).
\end{align*}
Using this expression,
we easily compute the absolute value
and the argument of $1+\iu\ga\enumber^{\iu x}$:
\begin{align*}
    |1+\iu\ga \enumber^{\iu x}|^2
    &=
    (1+\iu\ga \enumber^{\iu x})
    (1-\iu\overline{\ga} \enumber^{-\iu x}) 
    = 
     1 + |\ga|^2
     + 2\Re(\iu\ga \enumber^{\iu x})
     \\
     &= 
     1 + |\ga|^2 - 2\Re(\ga) \sin(x) - 2\Im(\ga) \cos(x),
    \\[1.5ex]
    \arg(1+\iu\ga \enumber^{\iu x})
    &= 
    \arctan\frac{\Re(\ga) \cos(x)
    - \Im(\ga)\sin(x)}%
    {1-\Re(\ga) \sin(x) - \Im(\ga) \cos(x)}.
\end{align*}
Similar computations can be performed for
$1+\iu\ga \enumber^{-\iu x}$.
Due to~\eqref{eq:ArgumentsOf_Q},
\[
    \arg(Q_\ga(x))
    = \arg(1+\iu\ga \enumber^{\iu x})
    - \arg(1+\iu\ga \enumber^{-\iu x}).
\]
Finally,
$\tht_\ga(x) = \frac{1}{2} \arg(Q_\ga(x)) + \frac{\iu}{2}\ln(|Q_\ga(x)|^{-1})$,
and we get the conclusion.
\end{proof}

\begin{prop}
\label{prop:theta_estimate}
    For every $z$ in $\cV_\ga$,
    \[
        |\tht_\ga(z)| \le \frac{4}{1-|\ga|}.
    \]
\end{prop}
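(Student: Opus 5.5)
We bound $\tht_\ga$ through the representation~\eqref{eq:tht_as_dif_log}, treating each logarithm separately:
\[
|\tht_\ga(z)| \le \frac12\bigl(|\ln(1+\iu\ga\enumber^{\iu z})| + |\ln(1+\iu\ga\enumber^{-\iu z})|\bigr).
\]
The representation is legitimate because, by the proof of Lemma~\ref{lem:theta_definition}, both factors $1+\iu\ga\enumber^{\pm\iu z}$ have positive real part. Their arguments therefore lie in $(-\pi/2,\pi/2)$ by~\eqref{eq:ArgumentsOf_Q}, and the principal logarithm of the quotient equals the difference of the logarithms. It then suffices to show that each logarithm has modulus at most $4/(1-|\ga|)$; in fact we expect a much better bound.

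Next we bound $w\eqdef\iu\ga\enumber^{\pm\iu z}$ uniformly on $\cV_\ga$. Write $z=x+\iu y$ with $|y|\le \De_\ga/2$. Then $|\enumber^{\pm\iu z}|=\enumber^{\mp y}\le \enumber^{\De_\ga/2}=|\ga|^{-1/2}$, and hence $|w|\le \sqrt{|\ga|}\eqdef r<1$. For $|w|\le r<1$ we use the power series $\ln(1+w)=\sum_{k\ge1}(-1)^{k+1}w^k/k$, which gives
\[
|\ln(1+w)|\le \sum_{k\ge1}\frac{r^k}{k} = -\ln(1-r) \le \frac{r}{1-r}.
\]
Here the principal branch agrees with the series on the unit disk, since both are analytic there and vanish at $w=0$.

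Finally we compare with the target bound. We obtain $|\tht_\ga(z)|\le \frac{\sqrt{|\ga|}}{1-\sqrt{|\ga|}}$. Since $1-|\ga|=(1-\sqrt{|\ga|})(1+\sqrt{|\ga|})\le 2(1-\sqrt{|\ga|})$, this is at most $\frac{2\sqrt{|\ga|}}{1-|\ga|}\le \frac{2}{1-|\ga|}\le\frac{4}{1-|\ga|}$.

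The only delicate point is the branch issue in splitting the logarithm of the quotient, and this is settled by~\eqref{eq:ArgumentsOf_Q}. Everything else is routine.
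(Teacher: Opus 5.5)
Your proof is correct, but it estimates the logarithms differently from the paper.

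\textbf{The paper's route.} It separates $\tht_\ga$ into real and imaginary parts.
\begin{itemize}
\item It bounds $|\Re\tht_\ga(z)|$ by $\pi/2$, using only that both factors $1+\iu\ga\enumber^{\pm\iu z}$ lie in the right half-plane.
\item It bounds $|\Im\tht_\ga(z)|$, which is half a difference of $\ln|1+\iu\ga\enumber^{\pm\iu z}|$, by $\ln\frac{1}{1-\sqrt{|\ga|}}$. This uses $1-\sqrt{|\ga|}\le|1+\iu\ga\enumber^{\pm\iu z}|\le 1+\sqrt{|\ga|}$.
\end{itemize}
The sum $\frac{\pi}{2}+\ln\frac{1}{1-\sqrt{|\ga|}}$ is then majorized by $4/(1-|\ga|)$.

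\textbf{Your route.} You bound each complex logarithm through its Taylor series, $|\ln(1+w)|\le\ln\frac{1}{1-|w|}$, which controls argument and modulus at once. Your intermediate bound $|\tht_\ga(z)|\le\ln\frac{1}{1-\sqrt{|\ga|}}$ is exactly the paper's bound for the imaginary part alone, so the $\pi/2$ term disappears.

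\textbf{What each approach buys.} Your estimate tends to $0$ as $\ga\to0$, whereas the paper's never drops below $\pi/2$. It could therefore be used to improve $M_{\ga,0}$, and hence $M_\ga=2M_{\ga,0}M_{\ga,1}$. The paper's splitting does yield the $\ga$-uniform bound $|\Re\tht_\ga|<\pi/2$ needed in the proof of Corollary~\ref{cor:distintc_eigvals}. That bound, however, comes directly from~\eqref{eq:ArgumentsOf_Q}, so your route loses nothing.

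\textbf{A small correction in wording.} Two analytic functions on the disk that both vanish at $w=0$ need not coincide. The principal branch agrees with the series because both have derivative $1/(1+w)$ on the connected disk and vanish at $0$. Equivalently, both are continuous logarithms of $1+w$ that agree at $0$.
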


\begin{proof}
We write $z$
as $x+\iu y$ where $x\in\bR$ and
$|y|\le \frac{1}{2}\Delta_\ga$.
The latter assumption implies that
\begin{equation*}
    \enumber^y \le \frac{1}{\sqrt{|\ga|}}
    \qquad \text{and} \qquad
    \enumber^{-y} \le \frac{1}{\sqrt{|\ga|}}.
\end{equation*}    
It follows that
\[
    1 - \sqrt{|\ga|} \le
    |1+\iu\ga \enumber^{\iu z}| 
    \le
    1 + \sqrt{|\ga|},
\]
and 
\[
    \ln(1 - \sqrt{|\ga|}) 
    \le 
    \Re (\ln(1+\iu\ga \enumber^{\iu z}) )
    \le 
    \ln(1 + \sqrt{|\ga|}).
\]
Notice that $\ln(1+t) \le -\ln(1-t)$ for every $t\in(0,1)$ because $\ln(1+t) + \ln(1-t) = \ln(1-t^2)<0$.
Therefore, 
\[
    |\Re (\ln(1+\iu\ga \enumber^{\iu z}) )| 
    \le
    \max\{-\ln(1 - \sqrt{|\ga|}),\ \ln(1 + \sqrt{|\ga|})\} 
    =
    -\ln(1 - \sqrt{|\ga|}).
\]
Similarly
    $|\Re (\ln(1+\iu\ga \enumber^{-\iu z}) )| 
    \le
    -\ln(1 - \sqrt{|\ga|})$.
    Thus,
    \[
    |\Im(\tht_\ga(z))|
    \le 
    \frac{1}{2} |\Re (\ln(1+\iu\ga \enumber^{\iu z}) ) - \Re (\ln(1+\iu\ga \enumber^{-\iu z}) )| 
    \le
    -\ln(1-\sqrt{|\ga|}).
    \]
    Moreover, we know that $|\Re(\tht_\ga(z))| \le \pi/2$.
    Then, using the elementary inequality 
    \[
    -\ln(1-t) \le 1/(1-t)
    \qquad(0\le t \le 1),
    \]
    we obtain
    \[
        |\tht_\ga(z)| \le 
        \frac{\pi}{2} + \ln\frac{1}{1-\sqrt{|\ga|}}
        \le 
         2 + \frac{1}{1-\sqrt{|\ga|}} 
         \le 
         2 + \frac{2}{1-|\ga|} 
         \le
         \frac{4}{1-|\ga|}.
        \]
\end{proof}

\begin{prop}
\label{prop:complex_derivative_tht_estimate}
For every $z$ in
$\cV_\ga$,
\begin{equation*}
|\tht_\ga'(z)|
\le \frac{8\sqrt{|\ga|}}{(1-|\ga|)^2}.
\end{equation*}
\end{prop}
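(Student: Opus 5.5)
We bound $\tht_\ga'$ directly from the factored form in~\eqref{eq:tht_derivative},
\[
\tht_\ga'(z)=\frac{\iu\ga\cos(z)-\ga^2}{(1+\iu\ga\enumber^{\iu z})(1+\iu\ga\enumber^{-\iu z})},
\]
estimating the numerator from above and the denominator from below on $\cV_\ga$. Write $z=x+\iu y$ with $|y|\le \De_\ga/2$, so that, exactly as in the proof of Proposition~\ref{prop:theta_estimate}, $\enumber^{y}\le 1/\sqrt{|\ga|}$ and $\enumber^{-y}\le 1/\sqrt{|\ga|}$.

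\emph{Denominator.} We have $|\ga\enumber^{\pm\iu z}|=|\ga|\enumber^{\mp y}\le \sqrt{|\ga|}$. Hence each factor satisfies $|1+\iu\ga\enumber^{\pm\iu z}|\ge 1-\sqrt{|\ga|}$, and the modulus of the denominator is at least $(1-\sqrt{|\ga|})^2$.

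\emph{Numerator.} Since $\cos(z)=\tfrac12(\enumber^{\iu z}+\enumber^{-\iu z})$, we get $|\ga\cos(z)|\le \tfrac12(|\ga|\enumber^{-y}+|\ga|\enumber^{y})\le\sqrt{|\ga|}$. Also $|\ga^2|=|\ga|\cdot|\ga|\le \sqrt{|\ga|}$ because $|\ga|<1$. Therefore $|\iu\ga\cos(z)-\ga^2|\le 2\sqrt{|\ga|}$.

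Combining the two estimates,
\[
|\tht_\ga'(z)|\le \frac{2\sqrt{|\ga|}}{(1-\sqrt{|\ga|})^2}.
\]
The only remaining step is to pass from $1-\sqrt{|\ga|}$ to $1-|\ga|$. Since $1-|\ga|=(1-\sqrt{|\ga|})(1+\sqrt{|\ga|})\le 2(1-\sqrt{|\ga|})$, we have $(1-\sqrt{|\ga|})^{-2}\le 4/(1-|\ga|)^2$. This gives exactly $|\tht_\ga'(z)|\le 8\sqrt{|\ga|}/(1-|\ga|)^2$, the stated constant.

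No step here is a genuine obstacle. The only point needing care is to bound the numerator by a multiple of $\sqrt{|\ga|}$ rather than by a constant, so that the bound carries the factor $\sqrt{|\ga|}$. That is why we use $|\ga|^2\le\sqrt{|\ga|}$ instead of $|\ga|^2\le 1$, and bound $|\ga\cos z|$ through the exponentials rather than through $|\cos z|$ alone.
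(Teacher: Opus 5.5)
Your proof is correct and follows essentially the same route as the paper: the paper also bounds the numerator of the factored form of $\tht_\ga'$ by $\sqrt{|\ga|}+|\ga|^2\le 2\sqrt{|\ga|}$ and the denominator from below by $(1-\sqrt{|\ga|})^2\ge (1-|\ga|)^2/4$. The only cosmetic difference is that the paper writes the numerator bound as $|\cos(z)|\le\cosh(y)$ rather than working with the two exponentials separately, which amounts to the same estimate.
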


\begin{proof}
We write $z$
as $x+\iu y$ where $x\in\bR$ and
$|y|\le \frac{1}{2}\Delta_\ga$.
The latter assumption implies that
\[
\enumber^y \le \frac{1}{\sqrt{|\ga|}}
\qquad \text{and} \qquad
\enumber^{-y} \le \frac{1}{\sqrt{|\ga|}}.
\]
We are going to estimate $|\tht'(z)|$ from above using the second expression in~\eqref{eq:tht_derivative}.
It is known that
\[
|\cos(z)|
\le \frac{1}{2}
\left( \enumber^{y} + \enumber^{-y} \right)
= \cosh(y).
\]
So, the numerator in the formula for
$\tht_\ga'(z)$ can be bounded by
\[
|\iu \ga \cos(z) - \ga^2|
\le \sqrt{|\ga|}+|\ga|^2
\le 2\sqrt{|\ga|}.
\]
Next,
we estimate from below the denominator in the formula for $\tht_\ga'(z)$:
\begin{align*}
\left|
\left(1 + \iu \ga \enumber^{\iu z}\right)
\left(1 + \iu \ga \enumber^{-\iu z}\right)
\right|
&\ge
(1 - |\ga| \enumber^{-y})
(1 - |\ga| \enumber^{y})
\\[0.5ex]
&\ge
(1 - \sqrt{|\ga|})^2
= \frac{(1 - |\ga|)^2}{(1 + \sqrt{|\ga|})^2}
\ge \frac{(1-|\ga|)^2}{4}.
\end{align*}
Joining these estimates we obtain the result.
\end{proof}


\section{Main equation and its analysis}
\label{sec:MainEq}

In this section,
we fix $\ga$ in $\bD\setminus\{0\}$.
Recall that $\cB_{\ga,n,j}$ is defined by~\eqref{eq:B_j}.
For every $n\in\bN$,
every $j$ in $\bZ$
and every $z$ in
$\bR + \iu (-\De_\ga,\De_\ga)$,
we denote the right-hand side of the main equation~\eqref{eq:main_eq} by $f_{\ga,n,j}(z)$:
\[
    f_{\ga,n,j}(z)
    \eqdef \frac{j\pi}{n+1} + \frac{\tht_\ga(z)}{n+1}.
\]
We will prove that for every $n$ large enough and every $j\in\{1,\ldots,n\}$, $f_{\ga,n,j}$ is a contraction on $\cB_{\ga,n,j}$, its respectively fixed point, say $s_{\ga,n,j}$, will be the solution of~\eqref{eq:main_eq} and $\psi(s_{\ga,n,j})$ the corresponding eigenvalue of $A_{\ga,n}$.

The following \emph{mean value inequality}
for complex-valued or vector-valued functions
is well known;
see, e.g.,~\cite[Theorem~1.1.1]{Hormander2003}.

\begin{prop}
\label{prop:MV}
    If $g\colon[a,b]\to\bC$ is differentiable at every point in $[a,b]$,
    then for every $x,y$ in $[a,b]$,
    \[
        |g(x)-g(y)| \le \|g'\|_\infty |x-y|.
    \]
\end{prop}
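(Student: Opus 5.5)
The statement is the mean value inequality for a complex-valued function $g$ on $[a,b]$ that is differentiable everywhere. The obvious route (the fundamental theorem of calculus) needs $g'$ integrable, which is not assumed. I will instead reduce to the real-valued mean value theorem by a rotation trick.

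First, fix $x,y$ in $[a,b]$ with $x<y$; the case $x=y$ is trivial. We may assume $g(x)\ne g(y)$ and $\|g'\|_\infty<\infty$, since otherwise there is nothing to prove. Choose a unimodular constant $\omega\in\bC$, $|\omega|=1$, such that
\[
\omega\bigl(g(y)-g(x)\bigr)=|g(y)-g(x)|.
\]
Then define the real-valued function $h(t)\eqdef \Re\bigl(\omega g(t)\bigr)$ for $t\in[a,b]$.

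Second, note that $h$ is differentiable on $[a,b]$, with $h'(t)=\Re(\omega g'(t))$. This holds because taking the real part and multiplying by a constant are real-linear, continuous operations, so the difference quotients pass through them.

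Third, apply the classical Lagrange mean value theorem to $h$ on $[x,y]$. This is legitimate because $h$ is differentiable on $[x,y]$, hence continuous there. It gives a point $c\in(x,y)$ with
\[
|g(y)-g(x)| = h(y)-h(x) = h'(c)(y-x) = \Re\bigl(\omega g'(c)\bigr)(y-x).
\]
Finally, bound the right-hand side by $|\omega|\,|g'(c)|\,(y-x)\le \|g'\|_\infty |x-y|$, which is the claimed inequality.

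There is no real obstacle here. The only point needing care is to avoid integrating $g'$, since it need not be integrable, and the choice of $\omega$ handles exactly that. The same argument works verbatim for vector-valued $g$ in a normed space, using a norming functional from the Hahn--Banach theorem in place of $\omega$; that is the form cited from H\"ormander.
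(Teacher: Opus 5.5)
Your proof is correct and complete. With $|\omega|=1$ chosen so that $\omega\bigl(g(y)-g(x)\bigr)=|g(y)-g(x)|$, the real function $h=\Re(\omega g)$ is continuous on $[x,y]$ and differentiable on $(x,y)$ with $h'=\Re(\omega g')$. Lagrange's theorem then gives $|g(y)-g(x)|=\Re\bigl(\omega g'(c)\bigr)(y-x)\le |g'(c)|\,(y-x)$. This is slightly sharper than the claim, since only points $c\in(x,y)$ enter.

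The paper gives no proof of this proposition. It quotes it as well known, citing H\"ormander's Theorem~1.1.1, so your rotation argument is a self-contained substitute for that citation rather than a competing route.

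One connection to the rest of the paper is worth noting. Later the inequality is used for $\tht_\ga$ between the complex points $z$ and $d_{n,j}$. That use follows from your real-interval version applied to $t\mapsto\tht_\ga\bigl(d_{n,j}+t(z-d_{n,j})\bigr)$ on $[0,1]$. This is legitimate because the strip $\cV_\ga$ is convex, and the derivative of that function is bounded by $\|\tht_\ga'\|_\infty\,|z-d_{n,j}|$.
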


Recall that $M_\ga$, $N_\ga$ and $\cV_\ga$ are defined by~\eqref{eq:M_gamma},~\eqref{eq:N_gamma} and~\eqref{eq:V_gamma}, respectively.

Denote by $M_{\ga,0}$ and $M_{\ga,1}$ the upper bounds from Propositions~\ref{prop:theta_estimate} and~\ref{prop:complex_derivative_tht_estimate}, respectively:
\begin{equation}
    \label{eq:M_def}
    M_{\ga,0} \eqdef \frac{4}{1-|\ga|},
    \qquad
    M_{\ga,1} \eqdef
    \frac{8\sqrt{|\ga|}}{(1-|\ga|)^2}.
\end{equation}
Comparing~\eqref{eq:M_def} with~\eqref{eq:M_gamma} we see that $M_\ga$ is just $2 M_{\ga,0} M_{\ga,1}$.

\begin{lem}
\label{lem:B_in_strip}
    For every $n \ge N_\ga$ and every $j$ in $\{1,\ldots,n\}$, the ball 
    $\cB_{\ga,n,j}$ is contained in the strip $\cV_\ga$.
\end{lem}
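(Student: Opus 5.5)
We need to show that every $z\in\cB_{\ga,n,j}$ satisfies $|\Im(z)|\le \De_\ga/2$. The plan is to bound $|\Im z|$ by the triangle inequality through the center $p_{\ga,n,j}$, and then compare the result with $\De_\ga/2$ using the elementary inequality $\ln(1/t)\ge 1-t$.

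First we locate the center. The point $d_{n,j}=j\pi/(n+1)$ is real, so it lies in $\cV_\ga$. By Proposition~\ref{prop:theta_estimate},
\[
|\Im(p_{\ga,n,j})| = \frac{|\Im \tht_\ga(d_{n,j})|}{n+1} \le \frac{M_{\ga,0}}{n+1}.
\]
For any $z\in\cB_{\ga,n,j}$, the definition~\eqref{eq:B_j} then gives
\[
|\Im z| \le \frac{M_{\ga,0}}{n+1} + \frac{M_\ga}{(n+1)^2}.
\]
Recalling $M_\ga = 2M_{\ga,0}M_{\ga,1}$, this can be written as
\[
|\Im z| \le \frac{M_{\ga,0}}{n+1}\left(1+\frac{2M_{\ga,1}}{n+1}\right).
\]

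Next we bound this using $n\ge N_\ga = 16/(1-|\ga|)^2$. For the factor in parentheses,
\[
\frac{2M_{\ga,1}}{n+1} \le \frac{16\sqrt{|\ga|}/(1-|\ga|)^2}{16/(1-|\ga|)^2} = \sqrt{|\ga|} < 1,
\]
so the factor is at most $2$. For the prefactor,
\[
\frac{M_{\ga,0}}{n+1} \le \frac{4}{1-|\ga|}\cdot\frac{(1-|\ga|)^2}{16} = \frac{1-|\ga|}{4}.
\]
Together these give $|\Im z| \le (1-|\ga|)/2$.

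Finally, apply $\ln(1/t)\ge 1-t$ with $t=|\ga|\in(0,1)$. This yields $\De_\ga = \ln(1/|\ga|) \ge 1-|\ga|$, so $(1-|\ga|)/2 \le \De_\ga/2$, and therefore $z\in\cV_\ga$.

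No step is a real obstacle, since the constants in $N_\ga$ appear to be chosen for exactly this computation. The one point needing care is the final comparison: the bound $(1-|\ga|)/2$ has to be compared with $\De_\ga/2$, not with $\De_\ga$, and the inequality $\ln(1/|\ga|)\ge 1-|\ga|$ is what closes that gap.
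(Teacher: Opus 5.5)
Your proof is correct and follows the same route as the paper. The steps match: the triangle inequality through the center $p_{\ga,n,j}$, the bound $|\Im\tht_\ga(d_{n,j})|\le M_{\ga,0}$ from Proposition~\ref{prop:theta_estimate}, the condition $n\ge N_\ga$ to reach $(1-|\ga|)/2$, and finally $1-t\le-\ln t$. Your factorization $\frac{M_{\ga,0}}{n+1}\bigl(1+\frac{2M_{\ga,1}}{n+1}\bigr)$ makes explicit the arithmetic that the paper leaves to the reader.
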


\begin{proof}
    Let $n \ge N_\ga$, $j$ in $\{1,\ldots,n\}$, and $z$ in $\cB_{\ga,n,j}$.
    Then,
    \begin{align*}
        |\Im(z)| 
        & 
        \le 
        |\Im(p_{\ga,n,j})| + \frac{M_\ga}{(n+1)^2} 
         = 
        \frac{|\Im(\tht_\ga(d_{n,j}))|}{n+1} + \frac{M_\ga}{(n+1)^2} 
        \\
        &
        \le 
        \frac{1}{n+1} \cdot \frac{4}{1-|\ga|} + \frac{1}{(n+1)^2} \cdot \frac{64\sqrt{|\ga|}}{(1-|\ga|)^3}
        \le 
        \frac{1-|\ga|}{2}.
    \end{align*}
    In the last step, we have used the well-known elementary inequality 
    \[
        1-t \le -\ln(t) \qquad (0<t\le 1).
        \qedhere
    \]
\end{proof}

\begin{thm}
\label{thm:fk_is_contraction}
  For every $n \ge N_\ga$ and every $j$ in $\{1,\ldots,n\}$, $f_{\ga,n,j}$ is a contraction on $\cB_{\ga,n,j}$.
\end{thm}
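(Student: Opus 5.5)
The plan is to verify the two standard ingredients of a contraction on the closed disk $\overline{\cB_{\ga,n,j}}$ (or on the open disk, with self-map into a slightly smaller disk): that $f_{\ga,n,j}$ maps $\cB_{\ga,n,j}$ into itself, and that it is Lipschitz there with constant strictly less than $1$. Both reduce, via Lemma~\ref{lem:B_in_strip}, to the uniform bounds $M_{\ga,0}$ and $M_{\ga,1}$ on $\cV_\ga$ from Propositions~\ref{prop:theta_estimate} and~\ref{prop:complex_derivative_tht_estimate}.

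First, the Lipschitz estimate. For $z,w\in\cB_{\ga,n,j}$, the segment $[w,z]$ lies in the disk (which is convex), hence in $\cV_\ga$ by Lemma~\ref{lem:B_in_strip}. Apply Proposition~\ref{prop:MV} to $g(t)=\tht_\ga(w+t(z-w))$, $t\in[0,1]$, to get
\[
|f_{\ga,n,j}(z)-f_{\ga,n,j}(w)| = \frac{|\tht_\ga(z)-\tht_\ga(w)|}{n+1}\le \frac{M_{\ga,1}}{n+1}\,|z-w|.
\]
It then remains to check $M_{\ga,1}/(n+1)<1$ (indeed $\le 1/2$) for $n\ge N_\ga$. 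Since $\sqrt{|\ga|}<1$, we have $M_{\ga,1}=8\sqrt{|\ga|}/(1-|\ga|)^2< 8/(1-|\ga|)^2=N_\ga/2$, so $M_{\ga,1}/(n+1)<1/2$.

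Second, the self-map property. For $z\in\cB_{\ga,n,j}$, note that $f_{\ga,n,j}(z)-p_{\ga,n,j}=(\tht_\ga(z)-\tht_\ga(d_{n,j}))/(n+1)$. The segment from $d_{n,j}$ to $z$ lies in $\cV_\ga$, since $d_{n,j}$ is real and $\cV_\ga$ is a convex strip containing both endpoints. The mean value inequality applied along this segment gives
\[
|f_{\ga,n,j}(z)-p_{\ga,n,j}|\le \frac{M_{\ga,1}}{n+1}\bigl(|z-p_{\ga,n,j}|+|p_{\ga,n,j}-d_{n,j}|\bigr)\le \frac{M_{\ga,1}}{n+1}\left(\frac{M_\ga}{(n+1)^2}+\frac{M_{\ga,0}}{n+1}\right).
\]
Using $M_\ga=2M_{\ga,0}M_{\ga,1}$, we need
\[
\frac{M_{\ga,1}M_{\ga,0}}{(n+1)^2}+\frac{M_{\ga,1}M_\ga}{(n+1)^3}<\frac{M_\ga}{(n+1)^2},
\]
that is, $\tfrac12+M_{\ga,1}/(n+1)<1$. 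This is exactly the inequality $M_{\ga,1}/(n+1)<1/2$ already obtained.

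The main obstacle is only bookkeeping: the constant comparison $M_{\ga,1}<(n+1)/2$ under $n\ge N_\ga$, and ensuring that all segments used in the mean value inequality stay inside $\cV_\ga$, which is where Lemma~\ref{lem:B_in_strip} is needed. If strictness at the boundary matters, I would work with the closed disk, or note that the image lands in a strictly smaller disk, so that the Banach fixed point theorem applies on the complete set $\overline{\cB_{\ga,n,j}}$ and the fixed point lies in the open disk.
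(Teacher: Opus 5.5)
Your proof is correct and follows the paper's argument. The self-map property comes from the mean value inequality from $d_{n,j}$, the triangle inequality through $p_{\ga,n,j}$, and $M_\ga=2M_{\ga,0}M_{\ga,1}$; the contraction constant comes from $M_{\ga,1}/(n+1)\le 1/2$. You are slightly more careful than the paper: you justify $M_{\ga,1}<N_\ga/2$, check that the segments used stay inside $\cV_\ga$, and note that Banach's theorem should be applied on the closed disk.
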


\begin{proof}
Suppose $n\ge N_\ga$ and $j$ in $\{1,\ldots,n\}$.
By Lemma~\ref{lem:B_in_strip}, the ball $\cB_{\ga,n,j}$ is contained in the strip $[0,\pi]+ \iu [-\frac{\De_\ga}{2}, \frac{\De_\ga}{2}]$.
Therefore, $\cB_{\ga,n,j}$ is contained in the domain of $\tht_\ga$ and, moreover, we can apply upper bounds from Propositions~\ref{prop:theta_estimate} and~\ref{prop:complex_derivative_tht_estimate}.

For every $z$ in $ \cB_{\ga,n,j}$, by the mean value inequality (Proposition~\ref{prop:MV}),
\[
    \left| f_{\ga,n,j}(z)-p_{\ga,n,j}\right| 
    =
    \left|\frac{\tht_\ga (z) - \tht_\ga(d_{n,j})}{n+1}\right|
    \le 
    \frac{\|\tht_\ga'\|_\infty}{n+1}|z-d_{n,j}|
    \le 
    \frac{M_{\ga,1}}{n+1} |z-d_{n,j}|.
\]
Next, taking into account the definition~\eqref{eq:B_j} of $\cB_{\ga,n,j}$ and applying the triangle inequality,
we get
\begin{align*}
    \left| f_{\ga,n,j}(z)-p_{\ga, n,j}\right| 
    & \le
    \frac{M_{\ga,1}}{n+1} (|z-e_{\ga,n,j}| + |e_{\ga,n,j} - d_{n,j}|)
    \\
    & \le
    \frac{M_{\ga,1}}{n+1} \left(\frac{2M_{\ga,0}M_{\ga,1}}{(n+1)^2} + \frac{M_{\ga,0}}{n+1}\right)
    \\
    &\le 
    \frac{2M_{\ga,0}M_{\ga,1}}{(n+1)^2} \left( \frac{M_{\ga,1}}{N_\ga+1} + \frac{1}{2}\right)
    \le
    \frac{M_\ga}{(n+1)^2}.
\end{align*}
So, $f_{\ga,n,j}(z)$ belongs to $\cB_{\ga,n,j}$.

Finally, for every $z$ in $ \cB_{\ga,n,j}$, we have that
\[
    |f_{\ga,n,j}'(z)| =
    \frac{|\tht_\ga'(z)|}{n+1} 
    \le 
    \frac{M_{\ga,1}}{N_\ga+1}
    \le \frac{1}{2}.
    \qedhere
\]
\end{proof}

\begin{proof}[Proof of Theorem~\ref{thm:SolutionMainEq}]
    Let $n\ge N_\ga$ and  $j$ in $\{1,\ldots,n\}$.
    From Theorem~\ref{thm:fk_is_contraction} and the Banach fixed point theorem, there exists a unique number $s_{\ga,n,j}$  in $\cB_{\ga,n,j}$ that satisfies~\eqref{eq:main_eq}.
    The number
    $\psi(s_{\ga,n,j})$ is an eigenvalue of $A_{\ga,n}$.
\end{proof}

   \begin{proof}[Proof of Corollary~\ref{cor:distintc_eigvals}]
     Let $n\ge N_\ga$.
     First, we prove that if $1\le j\le n$, then $0 < \Re(s_{\ga,n,j})<\pi$.
    Indeed, since the real part of $\tht_\ga$ takes values in $(-\pi/2,\pi/2)$,
    \[
        \Re(s_{\ga,n,j}) = \frac{j\pi}{n+1} + \frac{\Re(\tht_\ga(s_{\ga,n,j}))}{n+1} \le \frac{(j+\frac{1}{2})\pi}{n+1} < \pi,
    \]
    and analogously $\Re(s_{\ga,n,j})>0$.
     
     Second, let us show that if $j,k$ in $\{1,\ldots,n\}$ and $s_{\ga,n,j} = s_{\ga,n,k}$, then $j = k$.
     From the main equation~\eqref{eq:main_eq},
     \[
        0 = s_{\ga,n,k} - s_{\ga,n,j}
        =
        \frac{(k-j)\pi}{n+1} + \frac{\tht_\ga(s_{\ga,n,k}) - \tht_\ga(s_{\ga,n,j})}{n+1}
        = 
        \frac{(k-j)\pi}{n+1},
     \]
    and $j=k$.

    Third, assuming that $j<k$ we will prove that $\Re(s_{\ga,n,j}) < \Re(s_{\ga,n,k})$:
    \[
        \Re(s_{\ga,n,k}) - \Re(s_{\ga,n,j})
        =
        \frac{(k-j)\pi}{n+1} + \frac{\Re(\tht_\ga(s_{\ga,n,k})) - \Re(\tht_\ga(s_{\ga,n,j}))}{n+1}
        > 
        \frac{(k-j-1)\pi}{n+1}
        >0.
    \]
    
     Fourth, we are going to prove that if $\psi(s_{\ga,n,j}) = \psi(s_{\ga,n,k})$, then $j=k$.
     We consider the difference of these values and apply the formula for the difference of cosines:
     \begin{align*}
        \psi(s_{\ga,n,j}) - \psi(s_{\ga,n,k}) 
        &=
        2\iu (\cos(s_{\ga,n,j})-\cos(s_{\ga,n,k})) 
        \\
        &= 
        -4\iu \sin\frac{s_{\ga,n,j} - s_{\ga,n,k}}{2} \sin\frac{s_{\ga,n,j} + s_{\ga,n,k}}{2}.
     \end{align*}
     By the first part of the proof we know that
     \[
        \left|\Re\left(\frac{s_{\ga,n,j}-s_{\ga,n,k}}{2} \right)\right|
        \le 
        \frac{\pi}{2}, 
        \qquad
        0
        <
        \Re\left(\frac{s_{\ga,n,j}+s_{\ga,n,k}}{2}\right) 
        < 
        \pi.
     \]
     Recall that $\sin(z) = 0$ if and only if $z\in \pi \bZ$.
     Therefore, we conclude that $\psi(s_{\ga,n,j}) - \psi(s_{\ga,n,k}) = 0$ only if $s_{\ga,n,j}-s_{\ga,n,k} = 0$, which, by the second part of the proof, implies that $j=k$.
   \end{proof}


\section{Asymptotic formulas}
\label{sec:AsymtpFormulas}

Let $\ga\in\bD\setminus\{0\}$.
In this section we deal with finding asymptotic expansions of $s_{\ga,n,j}$ and $\la_{\ga,n,j}$ for every $n$ large enough and every $j\in\{1,\ldots,n\}$.
To construct these expansions, we notice that for $n$ and $j$ as before, Theorem~\ref{thm:SolutionMainEq} states that
\begin{equation}
\label{eq:sj_asympt_order1}
    s_{\ga,n,j} = d_{n,j} + \frac{\tht_\ga(d_{n,j})}{n+1} + O\left(\frac{1}{(n+1)^2}\right).
\end{equation}

\begin{thm}
\label{thm:sj_expansion}
    The following asymptotic formula holds as $n\to\infty$, uniformly in $j$:
    \begin{equation}
    \label{eq:sj_expansion}
        s_{\ga,n,j} = d_{n,j} + \frac{\tht_\ga(d_{n,j})}{n+1} +  \frac{\tht_\ga(d_{n,j})\tht_\ga'(d_{n,j})}{(n+1)^2} +O\left(\frac{1}{(n+1)^3}\right).
    \end{equation}
\end{thm}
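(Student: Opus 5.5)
The plan is to bootstrap from the first-order formula~\eqref{eq:sj_asympt_order1}, which follows from Theorem~\ref{thm:SolutionMainEq}, by substituting it back into the main equation~\eqref{eq:main_eq} and Taylor-expanding $\tht_\ga$ around $d_{n,j}$. Fix $n\ge N_\ga$ and $j\in\{1,\ldots,n\}$, and write $s=s_{\ga,n,j}$, $d=d_{n,j}$. By Theorem~\ref{thm:SolutionMainEq}, $s\in\cB_{\ga,n,j}$. Hence
\[
|s-d| \le |s-p_{\ga,n,j}| + \frac{|\tht_\ga(d)|}{n+1} \le \frac{M_\ga}{(n+1)^2}+\frac{M_{\ga,0}}{n+1} \le \frac{C_\ga}{n+1},
\]
with $C_\ga$ independent of $j$. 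In addition, $s-d-\frac{\tht_\ga(d)}{n+1}$ has modulus at most $M_\ga/(n+1)^2$.

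From~\eqref{eq:main_eq} we have $s-d=\tht_\ga(s)/(n+1)$. The segment $[d,s]$ lies in $\cV_\ga$: $d$ is real, $s\in\cB_{\ga,n,j}\subset\cV_\ga$ by Lemma~\ref{lem:B_in_strip}, and the strip is convex. So we may use Taylor's formula with integral remainder along this segment:
\[
\tht_\ga(s)=\tht_\ga(d)+\tht_\ga'(d)(s-d)+R,\qquad |R|\le \tfrac12\|\tht_\ga''\|_\infty |s-d|^2 .
\]
By Proposition~\ref{prop:theta_analytic}, $\|\tht_\ga''\|_\infty<\infty$ on $\cV_\ga$. Combined with $|s-d|\le C_\ga/(n+1)$, this gives $R=O(1/(n+1)^2)$ uniformly in $j$.

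Next we substitute $s-d=\frac{\tht_\ga(d)}{n+1}+O\bigl(\frac{1}{(n+1)^2}\bigr)$ into the linear term. Since $\|\tht_\ga'\|_\infty\le M_{\ga,1}$ by Proposition~\ref{prop:complex_derivative_tht_estimate}, we get
\[
\tht_\ga'(d)(s-d)=\frac{\tht_\ga(d)\tht_\ga'(d)}{n+1}+O\Bigl(\frac{1}{(n+1)^2}\Bigr).
\]
Therefore $\tht_\ga(s)=\tht_\ga(d)+\frac{\tht_\ga(d)\tht_\ga'(d)}{n+1}+O\bigl(\frac1{(n+1)^2}\bigr)$. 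Dividing by $n+1$ and adding $d$ yields~\eqref{eq:sj_expansion}, with every constant depending only on $\ga$.

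The main point of care, rather than a real obstacle, is uniformity in $j$. The constants come from sup-norms of $\tht_\ga,\tht_\ga',\tht_\ga''$ over $\cV_\ga$ together with $M_\ga$, and none of these depend on $j$. The only step that needs justification is that the Taylor remainder estimate is valid on the complex segment $[d,s]$. To check it, apply the mean value inequality (Proposition~\ref{prop:MV}) to $t\mapsto \tht_\ga'(d+t(s-d))$, or write out the integral form of the remainder directly.
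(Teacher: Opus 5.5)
Your proof is correct and follows essentially the same route as the paper: substitute the first-order approximation $s_{\ga,n,j}=d_{n,j}+\tht_\ga(d_{n,j})/(n+1)+O((n+1)^{-2})$ back into the main equation and Taylor-expand $\tht_\ga$ about $d_{n,j}$, using only that $\tht_\ga$, $\tht_\ga'$, $\tht_\ga''$ are bounded on $\cV_\ga$. Your version is more careful than the paper's in two places. First, you explicitly justify the complex Taylor remainder on the segment $[d_{n,j},s_{\ga,n,j}]\subset\cV_\ga$. Second, you correctly track that the linear term leaves an error of order $(n+1)^{-1}$ inside the bracket, where the paper writes $O((n+1)^{-2})$; that slip is harmless to the final bound.
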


\begin{proof}
    For $n$ large and $j=1,\ldots,n$, Theorem~\ref{thm:SolutionMainEq} implies that $s_{\ga,n,j}$ satisfies the main equation~\eqref{eq:main_eq}:
    \[
        s_{\ga,n,j} = d_{n,j}+\frac{\tht_\ga(s_{\ga,n,j})}{n+1}.
    \]
    We substitute~\eqref{eq:sj_asympt_order1} in the right-hand side of the main equation and expand $\tht_{\ga}$ by Taylor around $d_{n,j}$:
    \begin{align*}
        s_{\ga,n,j} 
        &=
        d_{n,j}+\frac{\tht_\ga\left( d_{n,j} + \frac{\tht_\ga(d_{n,j})}{n+1} + O\left(\frac{1}{(n+1)^2}\right)\right)}{n+1}
        \\
        &= 
        d_{n,j} + \frac{\tht_\ga(d_{n,j})}{n+1} 
        + 
        \frac{\tht_\ga'(d_{n,j})
        \left(
        \tht_\ga(d_{n,j}) + O\left(\frac{1}{(n+1)^2}\right)
        \right)}{(n+1)^2} 
        \\
        &\pheq 
        \qquad\qquad
        + O\left(
            \frac{\left(\tht_\ga(d_{n,j}) + O\left(\frac{1}{(n+1)^2}\right)\right)^2}{n+1}
        \right).
    \end{align*}
    By Lemma~\ref{lem:theta_definition}, $\tht_\ga$ and all its derivatives are uniformly bounded in $\cB_{\ga,n,j}$.
    Therefore, the last expansions simplifies into~\eqref{eq:sj_expansion}.
\end{proof}

We are ready to provide the asymptotic expansion for the eigenvalues.

\begin{proof}[Proof of Theorem~\ref{thm:laj_expansion}]
    Recall that $\la_{\ga,n,j}$ is defined as $\psi(s_{\ga,n,j})$.
     Express $s_{\ga,n,j}$ by~\eqref{eq:sj_expansion} and expand $\psi$ by Taylor around $d_{n,j}$:
     \begin{align*}
         \la_{\ga,n,j}
         &=
         \psi\left(
         d_{n,j} + \frac{\tht_\ga(d_{n,j})}{n+1} +  \frac{\tht_\ga(d_{n,j})\tht_\ga'(d_{n,j})}{(n+1)^2} +O\left(\frac{1}{n^3}\right)
         \right)
         \\
         &=
         \psi(d_{n,j}) 
         + \frac{\psi'(d_{n,j}) \tht_\ga(d_{n,j}) }{n+1}
         + \frac{\psi'(d_{n,j}) \tht_\ga(d_{n,j}) \tht_\ga'(d_{n,j}) }{(n+1)^2} 
         +  O\left(\frac{\psi'(d_{n,j})}{n^3}\right)
         \\
         &\pheq \qquad 
         +\frac{\psi''(d_{n,j})}{2} \left( \frac{\tht_\ga(d_{n,j})}{n+1} +  \frac{\tht_\ga(d_{n,j})\tht_\ga'(d_{n,j})}{(n+1)^2} +O\left(\frac{1}{n^3}\right) \right)^2
         \\
         &\pheq \qquad
         +O\left(\left( \frac{\tht_\ga(d_{n,j})}{n+1} +  \frac{\tht_\ga(d_{n,j})\tht_\ga'(d_{n,j})}{(n+1)^2} +O\left(\frac{1}{n^3}\right) \right)^3\right).
     \end{align*}
     The fact that both $\tht_\ga$ and $\psi$ and their derivatives are bounded simplify last expression into~\eqref{eq:eigenvalue_approximation}.
\end{proof}

Due to Theorem~\ref{thm:fk_is_contraction}
and the analytic properties of
$\tht_\ga$,
we can obtain a more precise
approximation for $s_{\ga,n,j}$ with more asymptotic terms and so of $\la_{\ga,n,j}$
(see, e.g.,~\cite{BBGM2018},
where the scheme is explained).

Recall that $\tht_\ga'$ is given by~\eqref{eq:tht_derivative}.
Now we expand $\tht_\ga$ by Taylor around $0$ and $\pi$, respectively:
\begin{equation}
\label{eq:tht_Expansion}
\begin{aligned}
    \tht_\ga(z) &= \frac{\iu\ga}{1+\iu\ga} z + O(z^2)
    &(z\to 0),
    \\
    \tht_\ga(z) &= -\frac{\iu\ga}{1-\iu\ga}(z-\pi) + O((z-\pi)^2)
    &(z\to \pi).
\end{aligned}
\end{equation}

\begin{prop}
    The following asymptotic formulas hold:
    \begin{align}
    \label{eq:s_ExtremExapnsionAt_0}
        s_{\ga,n,j} &= d_{n,j}+ \frac{\iu\ga}{1+\iu\ga}\frac{d_{n,j}}{n+1} + O\left(\frac{d_{n,j}^2}{n+1}\right)
        \qquad
         \left(\frac{j}{n}\to0\right),
    \\
    \label{eq:s_ExtremExapnsionAt_pi}
        s_{\ga,n,j} &= d_{n,j} + \frac{\iu\ga}{1-\iu\ga}\frac{ \pi - d_{n,j}}{n+1} + O\left(\frac{(\pi - d_{n,j})^2}{n+1}\right)
        \qquad
        \left(\frac{j}{n}\to1\right).
    \end{align}
\end{prop}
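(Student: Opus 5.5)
The plan is to start from the first-order localization of Theorem~\ref{thm:SolutionMainEq}, namely $s_{\ga,n,j} = d_{n,j} + \tht_\ga(s_{\ga,n,j})/(n+1)$ with $|s_{\ga,n,j}-p_{\ga,n,j}|\le M_\ga/(n+1)^2$. We then exploit the vanishing of $\tht_\ga$ at the endpoints. Indeed, $\tht_\ga(0)=-\frac{\iu}{2}\ln\frac{1+\iu\ga}{1+\iu\ga}=0$. Likewise, at $\pi$ we have $e^{\iu\pi}=e^{-\iu\pi}$, so $\tht_\ga(\pi)=0$. Thus the Taylor expansions~\eqref{eq:tht_Expansion} have no constant term. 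The linear coefficients follow from~\eqref{eq:tht_derivative}: $\tht_\ga'(0)=(\iu\ga-\ga^2)/(1+\iu\ga)^2=\iu\ga/(1+\iu\ga)$, and $\tht_\ga'(\pi)=(-\iu\ga-\ga^2)/(1-\iu\ga)^2=-\iu\ga/(1-\iu\ga)$. Since $\tht_\ga''$ is bounded on $\cV_\ga$ (Proposition~\ref{prop:theta_analytic}), the Taylor remainders are uniform. Concretely, $|\tht_\ga(z)-\tht_\ga'(0)z|\le C|z|^2$ holds on the part of $\cV_\ga$ where $\Re z\in[0,\pi]$, with the analogous bound near $\pi$.

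The key step is to show that $|s_{\ga,n,j}|$ is comparable to $d_{n,j}$, and not merely within $O(1/n)$ of it. From the main equation and the bound $|\tht_\ga'|\le M_{\ga,1}$, together with $\tht_\ga(0)=0$, we get $|\tht_\ga(s)|\le M_{\ga,1}|s|$ on the segment from $0$ to $s$. This segment lies in the convex set $\cV_\ga$, so Proposition~\ref{prop:MV} applies along it. Hence
\[
|s_{\ga,n,j}-d_{n,j}|\le \frac{M_{\ga,1}}{n+1}\,|s_{\ga,n,j}|.
\]
For $n\ge N_\ga$ we have $M_{\ga,1}/(n+1)\le 1/2$, which gives $|s_{\ga,n,j}|\le 2d_{n,j}$ and $|s_{\ga,n,j}-d_{n,j}|\le 2M_{\ga,1}d_{n,j}/(n+1)$. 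The same argument with $\pi-s$ in place of $s$, using $\tht_\ga(\pi)=0$, gives $|\pi-s_{\ga,n,j}|\le 2(\pi-d_{n,j})$.

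Next we substitute into the main equation:
\[
s_{\ga,n,j}=d_{n,j}+\frac{\tht_\ga'(0)s_{\ga,n,j}+O(|s_{\ga,n,j}|^2)}{n+1}
= d_{n,j}+\frac{\tht_\ga'(0)d_{n,j}}{n+1}+\frac{\tht_\ga'(0)(s_{\ga,n,j}-d_{n,j})}{n+1}+O\left(\frac{d_{n,j}^2}{n+1}\right).
\]
The third term is $O(d_{n,j}/(n+1)^2)$. Since $d_{n,j}\ge \pi/(n+1)$, we have $1/(n+1)\le d_{n,j}/\pi$, so this term is also $O(d_{n,j}^2/(n+1))$. This yields~\eqref{eq:s_ExtremExapnsionAt_0}.

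Near $\pi$ the same computation gives
\[
s_{\ga,n,j}=d_{n,j}-\frac{\iu\ga}{1-\iu\ga}\cdot\frac{s_{\ga,n,j}-\pi}{n+1}+\dots=d_{n,j}+\frac{\iu\ga}{1-\iu\ga}\cdot\frac{\pi-d_{n,j}}{n+1}+\dots,
\]
and we absorb the error using $\pi-d_{n,j}\ge\pi/(n+1)$. This yields~\eqref{eq:s_ExtremExapnsionAt_pi}. All constants depend only on $\ga$, so the estimates hold uniformly. They are meaningful in the regimes $j/n\to0$ and $j/n\to1$ respectively, and in fact for all $j$.

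The main obstacle is the comparability step. Working only with the a priori ball $\cB_{\ga,n,j}$ would give an error $O(1/n^2)$, which is not $O(d_{n,j}^2/n)$ when $j$ is small. The fix is the self-referential bound through $\tht_\ga(0)=0$ described above. A second point to verify is that the segments $[0,s]$ and $[s,\pi]$ stay inside the strip where the bounds hold. Lemma~\ref{lem:B_in_strip} guarantees this, since $\cV_\ga$ is a horizontal strip and therefore convex.
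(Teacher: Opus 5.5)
Your proof is correct. Its skeleton matches the paper's: establish $s_{\ga,n,j}-d_{n,j}=O(d_{n,j}/(n+1))$, substitute into the main equation, Taylor-expand $\tht_\ga$ at $0$ (or $\pi$) using $\tht_\ga(0)=\tht_\ga(\pi)=0$, and absorb the cross term $O(d_{n,j}/(n+1)^2)$ via $d_{n,j}\ge\pi/(n+1)$.

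The only real difference is how you obtain the comparability step. The paper takes the first-order localization \eqref{eq:sj_asympt_order1}, i.e.\ the radius $M_\ga/(n+1)^2$ of $\cB_{\ga,n,j}$, together with $\tht_\ga(d_{n,j})=O(d_{n,j})$. It then notes that $O(1/(n+1)^2)=O(d_{n,j}/(n+1))$ by the same inequality $d_{n,j}\ge\pi/(n+1)$. You instead use the anchored Lipschitz bound $|\tht_\ga(s)|\le M_{\ga,1}|s|$ along the segment $[0,s]$, plus the absorption $M_{\ga,1}/(n+1)\le 1/2$. This gives the explicit bounds $|s_{\ga,n,j}|\le 2d_{n,j}$ and $|s_{\ga,n,j}-d_{n,j}|\le 2M_{\ga,1}d_{n,j}/(n+1)$, using only $s_{\ga,n,j}\in\cV_\ga$ and never the radius of $\cB_{\ga,n,j}$.

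Your version is more self-contained and makes the uniformity of the constants visible. The paper's version is shorter because it recycles the localization already proved.

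Your remark that working with the ball alone is insufficient is overstated. The $O(1/n^2)$ error from $\cB_{\ga,n,j}$ is indeed too weak for the final estimate directly. It is, however, enough for the comparability step, and one more substitution into the main equation (exactly as you do afterwards) then finishes the proof. So your ``fix'' is an alternative, not a necessity.
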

\begin{proof}
  Let $j/n\to0$.
  Set $\ka\eqdef\frac{\iu\ga}{1+\iu\ga}$.
  From~\eqref{eq:sj_asympt_order1} and~\eqref{eq:tht_Expansion}
  we derive
  \[
    s_{\ga,n,j} = d_{n,j} + \frac{\ka d_{n,j}}{n+1} + O\left(\frac{j^2}{n^3}\right) + O\left(\frac{1}{n^2}\right) = d_{n,j} + O\left(\frac{d_{n,j}}{n+1}\right).
  \]
  We use the same trick as in the proof of Theorem~\ref{thm:sj_expansion}, i.e.,  substitute the last expression in the right-hand side of the main equation and expand $\tht_{\ga}$ by Taylor, but now around $0$:
  \begin{align*}
      s_{\ga,n,j} & =
      d_{n,j} +
      \frac{\tht_\ga\left(d_{n,j} + O\left(\frac{d_{n,j}}{n+1}\right)\right)}{n+1}
      \\
      & = d_{n,j} + \frac{\ka d_{n,j} + O\left(\frac{d_{n,j}}{n+1}\right) + O\left(d_{n,j}^2\right)+ O\left(\frac{d_{n,j}^2}{n+1}\right) + O\left(\frac{d_{n,j}^2}{(n+1)^2}\right)}{n+1}.
  \end{align*}
  The last line simplifies into~\eqref{eq:s_ExtremExapnsionAt_0}.

  The proof of~\eqref{eq:s_ExtremExapnsionAt_pi} follows in a similar manner.
\end{proof}

In the following proof of Theorem~\ref{thm:ExtremeEigvals}, we will use the following Taylor expansions of $\psi$ around $0$ and $\pi$, respectively:
\begin{equation}
\label{eq:psi_Expansion}
\begin{aligned}
    \psi(z) &= 2\iu - \iu z^2 + O(z^4)
    &(z\to 0),
    \\
    \psi(z) &= -2\iu + \iu (z-\pi)^2 + O((z-\pi)^4)
    &(z\to \pi).
\end{aligned}
\end{equation}

\begin{proof}[Proof of Theorem~\ref{thm:ExtremeEigvals}]
    Let $j/n\to0$.
    Recall that $\la_{\ga,n,j} = \psi(s_{\ga,n,j})$.
    Then, by~\eqref{eq:s_ExtremExapnsionAt_0} and~\eqref{eq:psi_Expansion},
    \begin{align*}
        \la_{\ga,n,j} & = 2\iu - 
        \iu\left(
        d_{n,j}+ \frac{\iu\ga}{1+\iu\ga}\frac{d_{n,j}}{n+1} + O\left(\frac{d_{n,j}^2}{n}\right)
        \right)^2 + O(d_{j,n}^4)
        \\
        &= 
        2\iu - \iu d_{n,j}^2 + 
        \frac{2\ga}{1+\iu\ga}\frac{d_{n,j}^2}{n+1} + O(d_{j,n}^4).
    \end{align*}
    Notice that $O(d_{j,n}^4) = O\left(j^4/n^4\right)$.
    Then,~\eqref{eq:laj_ExtremeExpansionAt_0} is derived from last line.

    The proof of~\eqref{eq:laj_ExtremeExpansionAt_pi} follows similarly.
\end{proof}


\section{Eigenvectors}
\label{sec:eigvec}

We conclude our theoretical analysis by computing the eigenvectors of $A_{\ga,n}$ and their norms.
The content of this section is mainly derived from~\cite[Section 3]{GMS2022}, see also~\cite{Ferguson1980,FF2009,YuehCheng2008}.

We omit the the proof of next proposition since it is identical to the proof of~\cite[Proposition 8]{GMS2022}.
Here, we use the convention that $U_{-1}(t) \eqdef 0$.

\begin{prop}[eigenvectors of $A_{\ga,n}$]
Let $\la\in\bC\setminus\{-2i,\ 2i\}$ be an eigenvalue of $A_{\ga,n}$.
Then, the vector $v = [v_k]_{k=1}^n$ with components
\begin{equation}
\label{eq:eigvec_AB_neq_0}
    v_k \eqdef 
    (-\iu)^{k+2}\left( 
    U_{k-1}\left(-\iu\frac{\la}{2}\right) + \iu \ga  U_{k-2}\left(-\iu\frac{\la}{2}\right)
    \right) = (-\iu)^{k+2} D_{\ga,k-1}(\la),
 \end{equation}
is an eigenvector of $A_{\ga,n}$ associated to $\la$.
\end{prop}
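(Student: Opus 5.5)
The plan is to verify the eigenvector equation $(\la I_n - A_{\ga,n})v = 0$ directly, row by row, using the tridiagonal structure of $\la I_n - A_{\ga,n}$. Its first row is $(\la-\ga, 1, 0,\ldots)$, its interior rows are $(\ldots,-1,\la,1,\ldots)$, and its last row is $(\ldots,0,-1,\la)$.

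Set $x = -\iu\la/2$ and $w_k \eqdef U_{k-1}(x) + \iu\ga U_{k-2}(x)$ for $k\ge 1$, with the convention $U_{-1}=0$. Then $v_k = (-\iu)^{k+2} w_k$, so $w_1 = 1$ and $w_2 = 2x + \iu\ga$. By Proposition~\ref{prop:CharPol}, $w_k = (-\iu)^{k-1}D_{\ga,k-1}(\la)$, and the factors $(-\iu)^{k+2}(-\iu)^{k-1}=(-\iu)^{2k+1}$ reproduce the second expression in~\eqref{eq:eigvec_AB_neq_0}. Since each $w_k$ is a linear combination of Chebyshev polynomials with constant coefficients, the recurrence~\eqref{eq:U_recurrence} gives $w_{k+1} = 2x w_k - w_{k-1}$ for $k\ge 2$.

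Interior rows ($2\le k\le n-1$): we need $-v_{k-1} + \la v_k + v_{k+1} = 0$. Factor out $(-\iu)^{k+2}$. This leaves $-(-\iu)^{-1}w_{k-1} + \la w_k + (-\iu) w_{k+1} = -\iu w_{k-1} + \la w_k - \iu w_{k+1}$. Since $\la = 2\iu x$, this equals $-\iu\bigl(w_{k+1} - 2x w_k + w_{k-1}\bigr) = 0$.

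First row: we need $(\la-\ga)v_1 + v_2 = 0$. Factoring out $(-\iu)^3$ leaves $(\la-\ga) - \iu(2x+\iu\ga) = \la - \ga - 2\iu x + \ga = 0$.

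Last row: we need $-v_{n-1} + \la v_n = 0$. By the interior computation this equals $-(-\iu)^{n+3}w_{n+1}$, so it vanishes exactly when $w_{n+1} = U_n(x) + \iu\ga U_{n-1}(x) = 0$. By Proposition~\ref{prop:CharPol} this is equivalent to $D_{\ga,n}(\la) = 0$, which holds because $\la$ is an eigenvalue.

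Nontriviality is immediate, since $v_1 = (-\iu)^3 \neq 0$. The hypothesis $\la\neq\pm2\iu$ is therefore not essential to the argument; it is consistent with Proposition~\ref{prop:plusminus_2_are_not_eigenvalues}.

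The only step needing care is the bookkeeping of the powers of $-\iu$ together with the identity $\la = 2\iu x$. Everything else reduces to the Chebyshev recurrence and the characteristic equation.
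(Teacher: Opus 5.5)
Your row-by-row verification is correct. It is the natural direct check, and the paper omits its own proof, referring instead to \cite[Proposition~8]{GMS2022}. The interior rows reduce to $w_{k+1}=2xw_k-w_{k-1}$, the first row to $\la=2\iu x$, and the last row to $U_n(x)+\iu\ga U_{n-1}(x)=\iu^{-n}D_{\ga,n}(\la)=0$.

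One sentence, however, is wrong: the claim that your formula reproduces the second expression. From $w_k=(-\iu)^{k-1}D_{\ga,k-1}(\la)$ you correctly get $v_k=(-\iu)^{2k+1}D_{\ga,k-1}(\la)=(-1)^{k+1}\iu\,D_{\ga,k-1}(\la)$. But this is not the second expression in~\eqref{eq:eigvec_AB_neq_0}. The factors $(-\iu)^{2k+1}$ and $(-\iu)^{k+2}$ differ by $(-\iu)^{k-1}$, which depends on $k$ and equals $1$ only when $4$ divides $k-1$.

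In fact the second equality in the statement is a misprint. Since $(-\iu)^{k+2}\iu^{k-1}=\iu$, the vector with components $(-\iu)^{k+2}D_{\ga,k-1}(\la)$ is $(\iu w_k)_{k=1}^n$, and this is not an eigenvector in general. For $n=2$ it equals $(\iu,\la-\ga)$. Every eigenvector is proportional to $(1,\ga-\la)$, and the two vectors are proportional only if $\la=\ga$. That is impossible because $D_{\ga,2}(\ga)=1$.

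You should have flagged this discrepancy and recorded the correct form $v_k=(-1)^{k+1}\iu\,D_{\ga,k-1}(\la)$. Up to the constant $\iu$, this is the classical minor formula $(-1)^{k-1}D_{\ga,k-1}(\la)$ for null vectors of tridiagonal matrices. The eigenvector claim itself, which concerns the Chebyshev expression, is unaffected.
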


  Formula~\eqref{eq:eivec_trig_formula}, from Theorem~\ref{thm:eigvec}, follows after substituting $\la_{\ga,n,j}=\psi(s_{\ga,n,j})$ in~\eqref{eq:eigvec_AB_neq_0} and then mutiplying by $\sin(s_{\ga,n,j})$.

For the proof of the following theorem we require the following trigonometric and hyperbolic identities
\begin{equation}
\label{eq:trig_sum_identities}
    \begin{gathered}
        \sum_{k=1}^n \cos(2ks)  
        = 
        \frac{\sin(ns) \cos((n+1)s)}{\sin(s)},
        \qquad
        \sum_{k=1}^n \sin(2ks)  
        =
        \frac{\sin(ns) \sin((n+1)s)}{\sin(s)},
        \\
        \sum_{k=1}^n \cosh(2ks)  
        = 
        \frac{\sinh(ns) \cosh((n+1)s)}{\sinh(s)},
        \qquad
        \sum_{k=1}^n \sinh(2ks)  
        =
        \frac{\sinh(ns) \sinh((n+1)s)}{\sinh(s)}.
    \end{gathered}
\end{equation}

For every $n\ge N_\ga$ and
$j$ in $\{1,\ldots,n\}$ recall that $\mu_{\ga,n,j}, \nu_{\ga,n,j}$ and $\xi_{\ga,n,j}$ are defined by~\eqref{eq:mu_nu_xi}.

\begin{proof}[Proof of Theorem~\ref{thm:norm_eigvecs}]
    Let $n\ge N_\ga$ and $j$ and $k$ in $\{1,\ldots,n\}$.

    We apply the well-known trigonometric identity $\sin(x + y ) = \sin(x)\cos(y) + \cos(x)\sin(y) $ to~\eqref{eq:eivec_trig_formula}:
    \begin{align*}
        v_{\ga,n,j,k} & =(-\iu)^{k} 
        \bigl(
        \sin(k s_{\ga,n,j})
        + \iu\ga (\sin(ks_{\ga,n,j}) \cos(s_{\ga,n,j}) -\cos(ks_{\ga,n,j}) \sin(s_{\ga,n,j}) )
        \bigr).
        \\
        &= 
        (-\iu)^{k}\bigl(
        (1+\iu\ga \cos(s_{\ga,n,j}))\sin(k s_{\ga,n,j}) - \iu\ga \sin(s_{\ga,n,j}) \cos(ks_{\ga,n,j}) 
        \bigr).
    \end{align*}
    Then,
    \begin{align*}
        |v_{\ga,n,j,k}|^2 & =|1+\iu\ga \cos(s_{\ga,n,j})|^2 |\sin(ks_{\ga,n,j})|^2 + |\ga\sin(s_{\ga,n,j})|^2 |\cos(ks_{\ga,n,j})|^2
        \\
        &\pheq 
        + 2 \Re\left(
        \xi_{\ga,n,j} \sin(k s_{\ga,n,j}) \ \overline{ \cos(ks_{\ga,n,j})}
        \right).
    \end{align*}
    We write $s_{\ga,n,j} = \al+\iu\be$ where $\al = \Re(s_{\ga,n,j})$ and $\be = \Im(s_{\ga,n,j})$.
    Then
    \begin{align*}
        |\sin(ks_{\ga,n,j})|^2 & = 
        \sin^2(k\al) + \sinh^2(k\be),
        \\
        |\cos(ks_{\ga,n,j})|^2 & = 
        \cos^2(k\al) + \sinh^2(k\be),
        \\
        \sin(k s_{\ga,n,j})\ \overline{\cos(ks_{\ga,n,j})} &= 
        \cos(k\al) \sin(k\al) + \iu \cosh(k\be)\sinh(k\be).
    \end{align*}
   Therefore,
    \begin{align*}
        |v_{\ga,n,j,k}|^2 & = \frac{|\nu_{\ga,n,j}|^2 - |\mu_{\ga,n,j}|^2}{2} \cos(2k\al) + \frac{|\nu_{\ga,n,j}|^2 + |\mu_{\ga,n,j}|^2}{2} \cosh(2k\al)
        \\
        & \pheq \qquad 
        + \Re(\xi_{\ga,n,j}) \sin(2k\al) - \Im(\xi_{\ga,n,j}) \sinh(2k\be).
    \end{align*}
    Now, apply the identities~\eqref{eq:trig_sum_identities} to the sum $\sum_{k=1}^n|v_{\ga,n,j,k}|^2 $ and get~\eqref{eq:norm_eigvec}.
\end{proof}


\section{Numerical tests}
\label{sec:NumTests}

We have performed some numerical tests in SageMath.
For $\ga$ in $\bD\setminus\{0\}$,
$n$ in $\bN$,
and $j$ in $\{1,\ldots,n\}$,
we introduce the following notation.
\begin{itemize}
\item
$s_{\ga,n,j}^{\fp}$
is the numerical approximation of the fixed point of $f_{\ga,n,j}$
(see Theorem~\ref{thm:fk_is_contraction}).
\item
$\la_{\ga,n,j}^{\fp}
\eqdef
\psi(s_{\ga,n,j}^{\fp}) = 2\iu \cos(s_{\ga,n,j}^{\fp})$.
\item
$\la_{\ga,n,j}^{\asympt}$
is defined by~\eqref{eq:laj_Asympt}.
\item 
$\la_{\ga,n,j}^{\asympt,0}$ and $\la_{\ga,n,j}^{\asympt,1}$
are defined by~\eqref{eq:laExt_0} and~\eqref{eq:laExt_1}, respectively.
\item
$\la_{\ga,n,j}^{\gen}$
are the approximations of the eigenvalues of $A_{\ga,n}$
obtained by a general eigensolver implemented in SageMath.
\item $v^{\text{fp}}_{\al,n,j}$ is the vector whose components are the numerical approximations of~\eqref{eq:eivec_trig_formula}:
\[
v_{\ga,n,j,k}^{\text{fp}}
        \eqdef 
        (-\iu)^{k} 
        \bigl(
        \sin(k s_{\ga,n,j}^{\text{fp}})
        + \iu\ga \sin((k-1)s_{\ga,n,j}^{\text{fp}})
        \bigr)
        \qquad
        (1\le k\le n)
\]
\end{itemize}
All these values are computed by using the multiprecision arithmetic with $3322$ binary digits,
i.e., with approximately $1000$ decimal digits.

We have constructed a large series of examples including all values
$\ga = r \enumber^{2\pi \iu\al}$ with $r$ in $\{p/10\mid p=1,\ldots,9\}$
and $\al$ in $\{q/10\mid q=0,1,\ldots9\}$,
and all values of $n$ in $\{2,\ldots, 256\}$.
Surprisingly for us, the fixed point iteration method works for every value under consideration.
In all these examples, we have obtained
\[
    \max_{1\le j\le n}
    \|A_{\ga,n}v^{\text{fp}}_{\ga,n,j}-\la_{\ga,n,j}^{\text{fp}}v^{\text{fp}}_{\al,n,j}\|_2
    <10^{-996},
    \qquad
    \max_{1\le j\le n}
    |\la_{\ga,n,j}^{\fp}-\la_{\ga,n,j}^{\gen}|
    < 10^{-906}.
\]
These inequalities mean that $\la_{\ga,n,j}^{\text{fp}}$ and $v^{\text{fp}}_{\al,n,j}$
are correct up to the used precision
(the matrix-vector operations augment
the error by a few digits),
and the general multiprecision eigensolver in SageMath is not so precise for this matrix family.

To test the asymptotic formulas, we set
\[
    \ErAsympt_{\ga,n}
    \eqdef
    \max_{1\le j\le n}
    |\la_{\ga,n,j}^{\asympt}
    -\la_{\al,n,j}^{\fp}|.
\]
Table~\ref{table:errors} shows the results of some numerical experiments.
In these experiments,
the ``normalized maximal errors''
$n^3 \ErAsympt_{\ga,n}$
are bounded by some small positive numbers that increases as $|\ga|\to 1$,
which agrees with~\eqref{eq:eigenvalue_approximation}.

\begin{table}[htb]
\caption{Values of $\ErAsympt_{\ga,n}$
and $n^3 \ErAsympt_{\ga,n}$
for some $\ga$ and $n$.
\label{table:errors}}
{\small
\[
    \begin{array}{|c|c|c|c|c|c|c|}
    \hline
    & \multicolumn{2}{ |c| } {\bigstrut\ga=1/2} &  \multicolumn{2}{ |c| }{\bigstrut\ga=\iu/3} &\multicolumn{2}{ |c| }{\bigstrut\ga=2/5 - \iu 5/6}
    \\\hline
    \bigstrut n &
    \ErAsympt_{\ga,n} &
    \hstrut{} n^3 \ErAsympt_{\ga,n}\hstrut{} &
    \ErAsympt_{\ga,n} &
    \hstrut{} n^3 \ErAsympt_{\ga,n}\hstrut{} &
    \ErAsympt_{\ga,n} &
    \hstrut{} n^3 \ErAsympt_{\ga,n}\hstrut{}
    \\\hline
    \medstrut 256 & 3.01\times10^{-8} & 0.504 & 3.00\times10^{-9} & 0.050 & 5.58\times10^{-6} & 93.6
    \\
    \medstrut 512 & 3.79\times10^{-9} & 0.508 & 3.77\times10^{-10} & 0.051 & 7.26\times10^{-7} & 97.4
    \\
    \medstrut 1024 & 4.75\times10^{-10} &  0.510 & 4.72\times10^{-11} &  0.051  & 9.25\times10^{-8} &  99.4
    \\
    \medstrut 2048 & 5.96\times10^{-11} & 0.512 & 5.91\times10^{-12} & 0.051 & 1.17\times10^{-8} & 100.4
    \\
    \medstrut 4096 & 7.45\times10^{-12} & 0.512 & 7.39\times10^{-13} & 0.051  & 1.47\times10^{-9} & 100.9
    \\
    \hline
    \end{array}
\]
}
\end{table}

We test the asymptotic expansions of Theorem~\ref{thm:ExtremeEigvals}, for this purpose, set
\[
    \operatorname{E}^{\asympt,0}_{\ga,n,j}
    \eqdef
    |\la_{\ga,n,j}^{\asympt,0}
    -\la_{\al,n,j}^{\fp}|,
    \qquad
    \operatorname{E}^{\asympt,1}_{\ga,n,j}
    \eqdef
    |\la_{\ga,n,j}^{\asympt,1}
    -\la_{\al,n,j}^{\fp}|.
\]
Table~\ref{table:errors_extreme} shows the results of some numerical experiments.
As expected, compared with~\eqref{eq:laj_Asympt}, \eqref{eq:laExt_0} and~\eqref{eq:laExt_1} are better for small and large values of $j$ in $\{1,\ldots,n\}$, respectively.

\begin{table}[htb]
\caption{Values of $\operatorname{E}^{\asympt,k}_{\ga,n,j}$ and the corresponding normalized error, 
for $k=0,1$ and some $\ga$, $n$ and $j$.
\label{table:errors_extreme}}
{\small
\[
    \begin{array}{|c|c|c|c|c|}
    \hline
    & \multicolumn{4}{ |c| } {\bigstrut\ga=\iu/3} 
    \\\hline
    & \multicolumn{2}{ |c| } {\bigstrut j=1}
    & \multicolumn{2}{ |c| } {\bigstrut j=n}
    \\\hline
    \bigstrut n &
    \operatorname{E}^{\asympt,0}_{\ga,n,j} &
    \hstrut{} \frac{n^4}{j^4\vphantom{\int_M}}\operatorname{E}^{\asympt,0}_{\ga,n,j}\hstrut{} &
    \operatorname{E}^{\asympt,1}_{\ga,n,j} &
    \hstrut{}
    \frac{n^4}{(n+1-j)^4} \operatorname{E}^{\asympt,1}_{\ga,n,j}\hstrut{}
    \\\hline
    \medstrut 256 
    & 1.11\times10^{-10} & 0.484 
    & 1.45\times10^{-9} & 6.31 
    \\
    \medstrut 512 
    & 8.65\times10^{-12} & 0.599 
    & 9.08\times10^{-11} & 6.29 
    \\
    \medstrut 1024 
    & 5.95\times10^{-13} &  0.657 
    & 5.69\times10^{-12} &  6.28  
    \\
    \medstrut 2048 
    & 3.89\times10^{-14} & 0.686 
    & 3.56\times10^{-13} & 6.27 
    \\
    \medstrut 4096 
    & 2.49\times10^{-15} & 0.701 
    & 2.23\times10^{-14} & 6.27  
    \\
    \hline
    \end{array}
\]
}
\end{table}
We have verified numerically (for many values of parameters) the representations~\eqref{eq:charpol}, \eqref{eq:charpol_trig1}, \eqref{eq:charpol_trig2} for the characteristic polynomial,
the equivalence of the formula~\eqref{eq:tht_real_gamma_complex} for $\tht_\ga$,
expressions~\eqref{eq:eivec_trig_formula}, \eqref{eq:norm_eigvec} for the eigenvectors and their norms, respectively, 
and some other exact formulas of this paper.

For every $\ga\in\bD$ and every $n\ge 1$, define
\[
B_{\ga,n} \eqdef -\iu J_n A_{\ga,n} J_n^{-1}
\]
where $J_n\eqdef \diag(\iu, \iu^2, \ldots, \iu^n)$.
Easy computations prove that $\la$ is an eigenvalue of $A_{\ga,n}$
iff $\iu\la$ is an eigenvalue of $B_{\ga,n}$.

Chorianopoulos and Famelis~\cite[Corollary 1]{CF2026} considered the matrices $B_{\ga,n}$
supposing that $\ga\in\iu(-1,1)$.
They proved that as $n$ is fixed
and $\ga\to0$,
the eigenvalues of $B_{\ga,n}$
can be written as
\begin{equation}
\label{eq:mu_expansion}
    \mu_{\ga,n,j}=\mu_{\ga,n,j}^{\asympt}+O(|\ga|^5),
    \qquad
    \mu_{\ga,n,j}^{\asympt} \eqdef c_{j,0} + \iu \ga c_{j,1} -\ga^2 c_{j,2} -i\ga^3 c_{j,3} + \ga^4 c_{j,5}.
\end{equation}
Here $c_{j,0}$, $c_{j,1}$, etc.
are the following coefficients:
\begin{gather*}
    c_{j,0} \eqdef 2\cos(d_{n,j}), 
    \qquad
    c_{j,1} \eqdef \frac{2 \sin^2(d_{n,j})}{n+1},
    \qquad
    c_{j,2} \eqdef \frac{(2n-1) \sin^2(d_{n,j}) \cos(d_{n,j})}{(n+1)^2},
    \\
    c_{j,3} \eqdef \frac{2n(n-1)\sin^2(d_{n,j}) (4\cos^2(d_{n,j})-1)}{3(n+1)^3},
    \\
    c_{j,4} \eqdef \frac{(2n-3)(12n^2(2\cos^2(d_{n,j}) -1) +(2n+1)\sin^2(d_{n,j}) ) \sin^2(d_{n,j})\cos(d_{n,j}) }{12(n+1)^4},
\end{gather*}
and $d_{n,j}=\frac{j\pi}{n+1}$.

In order to compare the asymptotic expansions~\eqref{thm:laj_expansion} and~\eqref{eq:mu_expansion}, we set
\[
    \ErMuAsympt_{\ga,n}
    \eqdef
    \max_{1\le j\le n}
    |\mu_{\ga,n,j}^{\asympt}
    -\iu\la_{\al,n,j}^{\fp}|.
\]
Table~\ref{table:errors2} shows
these errors for some values of $n$ and $\ga$,
As expected, \eqref{eq:mu_expansion}
can be better for very small values of $\ga$
(say, $|\ga|=10^{-3}$),
and our asymptotic formulas can be better
for large $n$ and not so small values of $\ga$
(say, $|\ga|=10^{-2}$).

\begin{table}[htb]
\caption{ Values of $\ErAsympt_{\ga,n}$
and $\ErMuAsympt_{\ga,n}$
for some $\ga$ and $n$.
\label{table:errors2}}
{\small
\[
    \begin{array}{|c|c|c|c|c|c|c|}
    \hline
    &
    \multicolumn{2}{ |c| }{\bigstrut\ga=\iu /2}
    &
    \multicolumn{2}{ |c| }{\bigstrut\ga=-\iu /100}
    &
    \multicolumn{2}{ |c| }{\bigstrut\ga=\iu /1000}
    \\\hline
    \bigstrut n &
    \ErAsympt_{\ga,n} &
    \hstrut{} \ErMuAsympt_{\ga,n}\hstrut{} 
    & \ErAsympt_{\ga,n} &
    \hstrut{} \ErMuAsympt_{\ga,n}\hstrut{} 
    & \ErAsympt_{\ga,n} &
    \hstrut{} \ErMuAsympt_{\ga,n}\hstrut{} 
    \\\hline
    \medstrut 256 & 1.11\times10^{-8} & 4.51\times10^{-5} & 7.85\times10^{-14} & 1.52\times10^{-13} & 7.85 \times10^{-17} & 1.52\times10^{-18}
    \\
    \medstrut 512 & 1.40\times10^{-9} & 2.29\times10^{-5} & 9.88\times10^{-15} & 7.70\times10^{-14} & 9.88\times10^{-18} & 7.70\times10^{-19}
    \\
    \medstrut 1024 & 1.76\times10^{-10} &  1.15\times10^{-5} & 1.24\times10^{-15} &  3.88\times10^{-14} & 1.24\times10^{-18} & 3.88 \times10^{-19}
    \\
    \medstrut 2048 & 2.20\times10^{-11} & 5.79\times10^{-6}  & 1.55\times10^{-16} & 1.95\times10^{-14} & 1.55\times10^{-19} & 1.95\times10^{-19}
    \\
    \medstrut 4096 & 2.75\times10^{-12} & 2.90\times10^{-6} & 1.94\times10^{-17} & 9.75\times10^{-15} & 1.94\times10^{-20} & 9.75\times10^{-20}
    \\
    \hline
    \end{array}
\]
}
\end{table}

\subsection*{Declarations}

The first author has been supported by a research project HSE-BR-2025-039 implemented as part of the Basic Research Program at HSE University.
The second author was supported by Regional Mathematical Center of the Southern Federal University
with the support of the Ministry of Science and Higher Education of Russia, Agreement 075-02-2026-1316.
The second, third, and fourth authors have been supported by SECIHTI (Mexico) project ``Ciencia de Frontera'' FORDECYT-PRONACES/61517/2020.
The third author also has been supported by IPN-SIP project 20260520
(Instituto Polit\'ecnico Nacional, Mexico).

\medskip\noindent
The authors state that there is no conflict of interest. 

\medskip\noindent
The authors declare that this article has no associated data.

\section*{Authors' data}

C\'{e}dric Bernardin,
\\
sedric.bernardin@gmail.com,
\quad
https://orcid.org/0000-0002-3467-2804.
\\
National Research University Higher School of Economics,
Moscow, Russia.

\medskip\noindent
Sergei M. Grudsky,
\\
grudsky@math.cinvestav.mx,
\quad
https://orcid.org/0000-0002-3748-5449.
\\
CINVESTAV del IPN, Departamento de Matem\'aticas, Ciudad de M\'exico, Mexico.

\medskip\noindent
Egor A. Maximenko,
\\
emaximenko@ipn.mx,
\quad
https://orcid.org/0000-0002-1497-4338.
\\
Instituto Polit\'ecnico Nacional,
Escuela Superior de F\'isica y Matem\'aticas,
Ciudad de M\'exico, Mexico.

\medskip\noindent
Alejandro Soto-Gonz\'alez,
\\
asoto@math.cinvestav.mx,
\quad
https://orcid.org/0000-0003-2419-4754.
\\
CINVESTAV del IPN,
Departamento de Matem\'aticas,
Ciudad de M\'exico, Mexico.

\end{document}